\newtheorem{theorem}{Theorem}[section]
\newtheorem{lemma}[theorem]{Lemma}
\newtheorem{definition}[theorem]{Definition}
\newtheorem{remark}[theorem]{Remark}
\definecolor{darkred}{rgb}{0.9,0.1,0.1}
\font\bbc=msbm10 scaled 1200
\newcommand{\R}{\mbox {\bbc R}}
\def\eps{{\varepsilon}}
\def\cL{\mathcal{L}}
\def\CC{\mathcal{C}}
\let\d\partial
\def\E{\mathbf{E}}
\def\P{\mathbf{P}}
\numberwithin{equation}{section}
\begin{document}

\title{From averaging to homogenization in cellular flows - an exact description of the transition}
\author{Martin Hairer$^1$, Leonid Koralov$^2$, Zsolt Pajor-Gyulai$^2$}
\institute{University of Warwick, \email{M.Hairer@Warwick.ac.uk}
\and University of Maryland, \email{koralov@math.umd.edu,pgyzs@math.umd.edu}}

\maketitle
\begin{abstract}
We consider a two-parameter averaging-homogenization type elliptic problem together with the stochastic representation of the solution. A limit theorem is derived for the corresponding diffusion process and a precise description of the two-parameter limit behavior for the solution of the PDE is obtained.
\end{abstract}

\section{Introduction}
Let $D_R\subseteq\mathbb{R}^2$ be obtained from a bounded smooth domain $D$ by stretching it by a factor $R$.
Consider the elliptic Dirichlet problem
\begin{equation}\label{elliptic_problem1}
\frac{1}{2}\Delta u^{\varepsilon,R}+\frac{1}{\varepsilon}v\nabla u^{\varepsilon,R}= -f\left(\frac{x}{R}\right)\textrm{ in }D_R,\qquad u^{\varepsilon,R}|_{\partial D_R}=0,
\end{equation}
where  $f$ is a bounded  continuous function on $D$ and $v$ is a smooth incompressible periodic  Hamiltonian vector field. For simplicity, assume that $D$ contains the origin. We further assume that the stream function $H(x_1,x_2)$ such that
\[
v = \nabla^{\perp} H=(-\d_2 H, \d_1 H)\;,
\]
is itself periodic in both variables, that is, the integral of $v$ over the periodicity cell is zero.  We will denote the cell of periodicity by $ \mathcal{T}$, which can be viewed as a unit square or, alternatively, as a torus. Our main additional structural assumption is that the critical points of $H$ are non degenerate and that there is a level set of $H$ (say $H=0$ without loss of generality) that contains some of the saddle points and forms a lattice in $\mathbb{R}^2$, thus dividing the plane into bounded sets that are invariant under the flow (see Figure \ref{fig:Cellular_flow}).
A typical example to keep in mind is the canonical cellular flow given by
$H(x_1,x_2) = \sin(x_1)\sin(x_2)$.

\begin{figure}[center]
\centering
\includegraphics[scale=0.4]{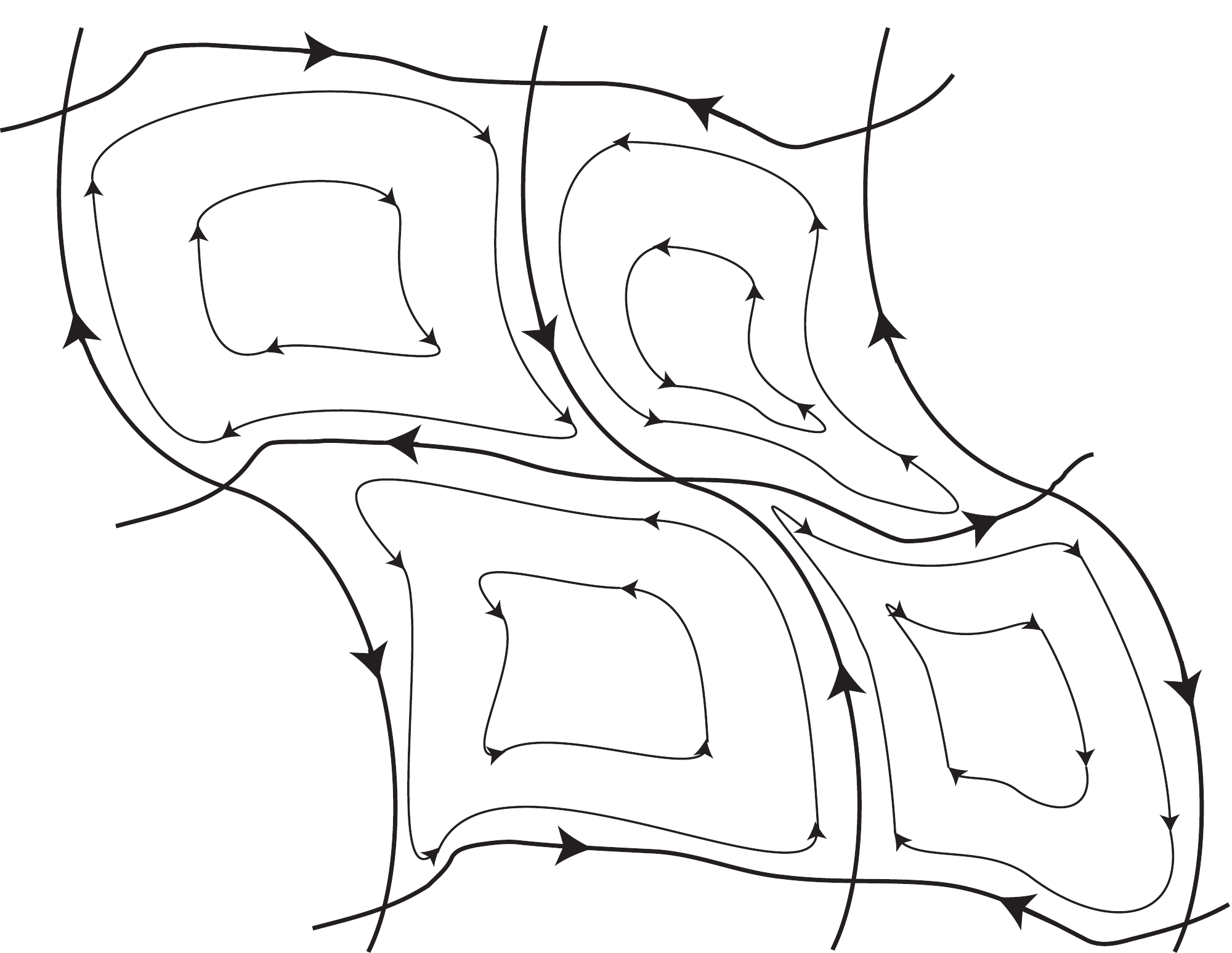}
\caption{A period of the cellular flow}
\label{fig:Cellular_flow}
\end{figure}

There are two parameters in this problem: $\varepsilon$ measures the inverse of the strength of the vector field, while $R$ measures the size of the domain. For fixed $R$ (for example
when $D_R$ coincides with exactly one cell) and $\varepsilon\downarrow 0$,
solution to \eqref{elliptic_problem1} becomes constant on stream lines.
Indeed, multiplying by $\varepsilon$
and letting $\varepsilon\downarrow 0$ formally gives us $v\nabla u=0$. The precise
values of the asymptotics of the solution on each streamline are determined by an
ODE corresponding to the structure of the level sets according to classical averaging results \cite{freidlin2012random}.

If on the other hand $\varepsilon$ is fixed and $R \uparrow \infty$, then the asymptotic behavior of $u$ can be obtained by homogenization (e.g. \cite{papanicolau1978asymptotic,zhikov1994homogenization,pavliotis2008multiscale}), i.e., by solving an elliptic problem on $D$ with appropriately chosen constant coefficients.

It was shown in \cite{Iyer2013} that averaging and homogenization can also be used to study the two-parameter asymptotics in certain regimes. Namely, if $R^4\log^2R\leq c/(\varepsilon\log^2\varepsilon)$ for some constant $c$ as $1/\varepsilon,R\uparrow\infty$, then averaging theory applies. On the other hand, if $R^{4-\alpha}\geq 1/\varepsilon$ for some positive $\alpha$, then homogenization type behavior is observed. The methods in \cite{Iyer2013} are analytic, based on  investigating the asymptotic behavior of the principal Dirichlet eigenvalue of the elliptic operator, and it seems unlikely that they can be directly applied near the transition regime. To our knowledge,  only numerical results were available in the intermediate cases \cite{eps342977,Pavliotis20091030} up until now.

In this paper, we study the two-parameter asymptotics using a probabilistic approach and we prove that the crossover from homogenization to averaging occurs when $R$ is precisely of order $\varepsilon^{-1/4}$. In order to achieve this, we study the family of two dimensional diffusion processes associated to \eqref{elliptic_problem1}, namely
\[
dX_t^{x,\varepsilon}=\frac{1}{\varepsilon}v(X_t^{x,\varepsilon})dt+dW_t,\qquad X_0^{x,\varepsilon}=x,
\]
on some probability space $(\Omega,\mathcal{F},\P)$, where $W_t$ is a two
dimensional Brownian motion.
Our goal is to obtain a limit theorem as $\varepsilon \downarrow 0$ provided that $X_t^{x,\varepsilon}$ is considered on scales of order $\eps^{-1/4}$, and to identify the limiting
process as a time changed Brownian motion. The time change arising in the construction of the
limiting process is non-trivial and can be described as the local time of a diffusion
process on a certain graph which we now explain.

It is well known that there is a graph $G$ naturally associated to the structure of the level sets of $H$ (see Figure \ref{fig:proj2graph}). Namely, let $\mathcal{L}=\{x\in\mathbb{R}^2 ; H(x)=0\}$ be  the connected level set of $H$ that contains a periodic array of saddle points, and denote the corresponding level set on the torus by $\mathcal{L}_{\mathcal{T}}$. Let $A_i$, $i=1,\ldots,n$, be the saddle points of $H$ in $\mathcal{L}_{\mathcal{T}}$. Also, let $U_i$, $i=1,\ldots,n$, be the connected components of $\mathcal{T}\backslash\mathcal{L}_{\mathcal{T}}$. (There is no particular connection between the numbering of the $U_i$'s and that of the $A_i$'s, although by Euler's theorem there is actually the same number of them).  For notational simplicity, assume that there are no loops with only one saddle point in $\mathcal{L}_{\mathcal{T}}$ and that there are no saddle points of $H$ inside any $U_i$. The graph $G$ will then have an interior vertex $O$ and $n$ edges connecting $O$ with the exterior vertices corresponding to the extrema of $H$. This is also called the Reeb graph of $H$.

Define
\[
\Gamma:\mathbb{\mathcal{T}} \to G,\qquad\Gamma(x)=(i,|H(x)|)~~ {\rm if}~x \in \overline{U}_i,
\]
to be the mapping that takes  $U_i$ into an edge $I_i$ of the graph in such a way that the entire set $\mathcal{L}_\mathcal{T}$ is mapped into $O$,  the extrema inside each $U_i$ are mapped into the corresponding exterior vertices, and each connected component of a level set of $H$ is mapped into one point on the corresponding edge of the graph (with each point labeled by the number of the edge and the coordinate on the edge).  Naturally, $\Gamma$ can be extended periodically to the entire plane.

\begin{figure}[h]
\centering
\begin{tikzpicture}
\node[inner sep=0pt] (russell) at (0,0)
    {\includegraphics[scale=0.4]{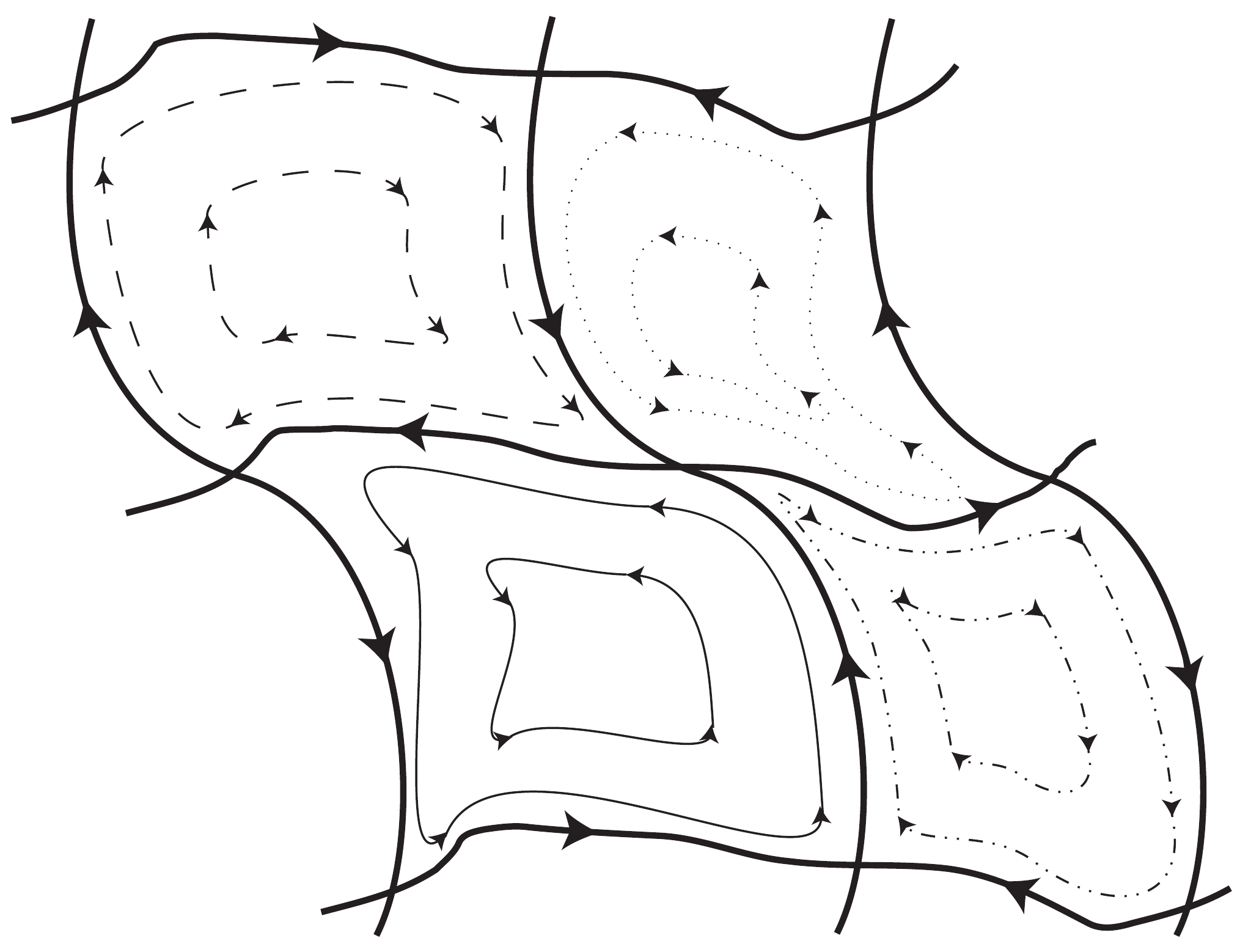}};
\node at (0.6,1) {$\mathbf 2$};
\node at (-2,1.4) {$\mathbf 1$};
\node at (2.6,-1.45) {$\mathbf 4$};
\node at (-0.1,-1.2) {$\mathbf 3$};

\node[circle,inner sep=1mm,fill=black] at (7,0) [label=right:{$O=\Gamma(\mathcal{L})$}] {};
\draw[dashed] (7,0) -- (5.5,3) node[midway,below left] {$\Gamma(\mathbf{1})$};
\draw (7,0) -- (5.5,-3) node[midway,above left] {$\Gamma(\mathbf{3})$};
\draw[dotted] (7,0) -- (8.5,3) node[midway,below right] {$\Gamma(\mathbf{2})$};
\draw[dash pattern=on 3pt off 2pt on .5pt off 2pt on .5pt off 2pt] (7,0) -- (8.5,-3)
 node[midway,above right] {$\Gamma(\mathbf{4})$};
\end{tikzpicture}
\caption{The graph corresponding to the structure of the level sets of $H$ on $\mathcal{T}$}
\label{fig:proj2graph}
\end{figure}

It was shown in \cite[Chapter 8]{freidlin2012random} that the non-Markovian processes $\Gamma(X_t^{x,\varepsilon})$ converge in distribution,  as $\varepsilon \downarrow 0$,  to a diffusion on the graph.  Let us describe this limiting process briefly. On the $i$-th edge of the graph, the process is a
diffusion with generator
\[
L_i=\frac{a(i,y)^2}{2}\frac{d^2}{dy^2}+b(i,y)\frac{d}{dy},
\]
where the coefficients $a(i,y)$, $b(i,y)$ can be  computed explicitly from $H$. The behavior of the process at the interior vertex can also be described in terms of $H$. More precisely, for a set of constants $\alpha_i>0$ with $\sum_{i=1}^n\alpha_i=1$, we can define an operator $A$ on
the domain $D(A)$ that consists of the functions $F$ that satisfy:

\begin{enumerate}
\item[a)] $F \in \CC(G)$ and furthermore $F \in \CC^2(I_i)$ for each edge $i$,
\item[b)] $L_i F(x)$, $x \in I_i$, which is defined on the union of the interiors of all the edges, can be extended to a continuous function on $G$,
\item[c)] $\sum_{i=1}^n\alpha_iD_iF(O)=0$, where $D_iF(O)$ is the one-sided interior derivative of $F$ along the edge $I_i$.
\end{enumerate}

%
%
%
%\[
%D(A)=\left\{F\in \CC(G)\bigg| F|_{I_i} \in \CC^2(I_i),\sum_{i=1}^n\alpha_iD_iF(O)=0, L_iF|_{I_i}(O)=L_jF|_{I_j}(O)\right\}
%\]
We then define the operator $A$
by $AF|_{I_i}=L_iF|_{I_i}$.
%where $D_iF(O)$ is the one sided interior derivative of $F$ on the edge $I_i$.
As shown in \cite{freidlin1993}, $A$ generates a Fellerian Markov family $Y_t^y$ on $G$.
With these notations at hand, the measures on $\CC([0,\infty);G)$ induced by the
processes $\Gamma(X^{x,\varepsilon}_t)$ converge weakly to the one induced by the
process $Y^{\Gamma(x)}_t$, provided that the constants $\{\alpha_i\}_{i=1}^n$ are
suitably chosen. Without loss of generality, we may assume that the family $Y^y_t$ is
realized on the same probability space as $X^{x,\varepsilon}_t$, and that these two families
of processes are independent.

Note that the classical Freidlin-Wentzell theory requires $H(x)\to\infty$ as $|x|\to\infty$. Nevertheless, adapting the results for the compact setting on $\mathcal{T}$ is trivial.

\begin{definition}\label{def_local_time}
The local time of $Y^{y_0}$ is the unique nonnegative random field
\[
L^{y_0}=\{L^{y_0}_t(y)\,:\, (t,y)\in[0,\infty)\times G\}
\]
such that the following hold:
\begin{enumerate}
\item The mapping $(t,y)\to L^{y_0}_t(y)$ is measurable and $L^{y_0}_t(y)$ is adapted.
\item For each $y \in G$, the mapping $t\to L^{y_0}_t(y)$ is non-decreasing and constant on each open interval where $Y^{y_0}_t\neq y$.
\item For every Borel measurable $f:G\to [0,\infty)$, we have
\[
\int_0^tf(Y^{y_0}_s)a^2(Y^{y_0}_s)ds=2\int_Gf(y)L^{y_0}_t(y)dy\qquad a.s.
\]
\item $L^{y_0}_t(y)$ is a.s. jointly continuous in $t$ and $y$ for $y\neq O$, while
\[
L_t^{y_0}(O)=\sum_{i=1}^n\lim_{y\to O,~ y\in I_i}L^{y_0}_t(y)\;.
\]
\end{enumerate}
\end{definition}
The existence and uniqueness of local time for diffusions on the real line is relatively well studied. These standard results, together with a straightforward modification of the discussion in Section 2 of \cite{freidlin2000sheu}, give the existence and uniqueness for the local time on the graph. Note (see for example \cite{freidlin2012random}) that for processes on $G$ arising from the averaging of a Hamiltonian system, $a^{-2}(\cdot)$ is locally integrable near the interior vertex, which is sufficient for the method of \cite{freidlin2000sheu} to work.

The main result of this paper is the following. For a positive definite symmetric matrix $Q$, let $\tilde{W}^Q_t$ be a two dimensional  Brownian motion with covariance matrix $Q$. Assume that the families of processes $X^{x,\varepsilon}_t$, $Y^y_t$, and $\tilde{W}^Q_t$ are independent.
Also consider the process $\tilde{W}^Q_{L^{y}_t}$, where $L^y_t = L^{y}_t(O)$ is the local time of $Y^{y}_t$  at the interior vertex.
\begin{theorem}\label{main_thm}
There exists a strictly positive definite matrix $Q$ such that the law of the process $\varepsilon^{1/4}X_t^{x,\varepsilon}$  converges, as $\varepsilon \downarrow 0$, to that of $\tilde {W}^Q_{L^{\Gamma(x)}_t}$.
\end{theorem}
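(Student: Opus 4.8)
The plan is to decompose the motion of $X^{x,\eps}_t$ into its "graph coordinate" part $\Gamma(X^{x,\eps}_t)$, which by the Freidlin--Wentzell theory converges to the diffusion $Y_t$ on $G$, and a "cell-counting" part that records which cell of the periodic lattice the particle currently occupies. On scales of order $\eps^{-1/4}$ the rescaled process $\eps^{1/4} X^{x,\eps}_t$ feels only the latter, since a single cell has size $O(1)$ which is $O(\eps^{1/4})$ after rescaling and hence collapses to a point in the limit. The heuristic is that the particle spends almost all of its time circulating deep inside a cell (where it makes no macroscopic progress), and transitions between adjacent cells only when the slow variable $\Gamma(X^{x,\eps}_t)$ visits the interior vertex $O$; the number of such transitions up to time $t$ is, by the standard excursion-theoretic description of diffusions on graphs, governed by the local time $L^{\Gamma(x)}_t(O)$. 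Each transition contributes an essentially independent, mean-zero, $O(1)$ displacement to the cell index (its covariance structure depending on $H$ through the branching probabilities $\alpha_i$ and the geometry of the separatrix loops), so that summing $\sim \eps^{-1/2} L^{\Gamma(x)}_t(O)$ such increments and rescaling by $\eps^{1/4}$ produces, by a martingale CLT, a Brownian motion run for time $L^{\Gamma(x)}_t(O)$ — this is exactly $\tilde W^Q_{L^{\Gamma(x)}_t}$.

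Concretely I would proceed in the following steps. First, introduce the lift: write $X^{x,\eps}_t = \Pi(X^{x,\eps}_t) + N^{\eps}_t$ where $\Pi$ denotes the projection to the torus $\mathcal T$ and $N^\eps_t \in \Z^2$ is the (piecewise constant, but here I mean the winding) lattice component; more precisely decompose $X^{x,\eps}_t$ as a point in $\mathcal T$ plus an integer-vector process recording the cell. Second, set up a sequence of stopping times $0 = \tau_0 < \sigma_1 < \tau_1 < \sigma_2 < \cdots$ where the $\sigma_k$ are the times the process enters a small neighborhood of $\mathcal L_{\mathcal T}$ (the separatrix) and $\tau_k$ the subsequent times it returns deep into the interior of whichever cell it has chosen; the increments $\xi_k := N^\eps_{\tau_k} - N^\eps_{\tau_{k-1}} \in \{0, \pm e_1, \pm e_2\}$ are the elementary cell jumps. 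Third, establish a coupling/approximation lemma: the pair $(\Gamma(X^{x,\eps}_t), \#\{k : \tau_k \le t\})$ is, up to errors vanishing as $\eps \downarrow 0$, a function of the limiting graph diffusion $Y_t$ and its local time at $O$, with the number of completed excursion-crossings of the bottleneck being asymptotically $c \eps^{-1/2} L^{\Gamma(x)}_t(O)$ for an explicit constant $c$ — this is where the local-time normalization in Definition~\ref{def_local_time}, and the local integrability of $a^{-2}$ near $O$, enter. Fourth, apply a martingale functional central limit theorem to the compensated sum $\sum_{k \le m} (\xi_k - \EXP[\xi_k \mid \mathcal F_{\tau_{k-1}}])$: using the periodicity-induced symmetry one checks $\EXP[\xi_k \mid \mathcal F_{\tau_{k-1}}] = 0$ and that the conditional covariance converges to a fixed positive-definite matrix $\bar Q$, whence $\eps^{1/4} N^\eps_{\cdot}$ converges to $\tilde W^{\bar Q}$ time-changed by $c\, L^{\Gamma(x)}_\cdot(O)$; absorbing $c$ into $Q := c\bar Q$ gives the statement. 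Finally, show tightness of $\eps^{1/4} X^{x,\eps}_t$ in $\CC([0,\infty);\R^2)$ (the within-cell part $\Pi(X^{x,\eps}_t)$ is bounded, hence negligible after rescaling, and the jump part is tight by the increment estimates) and identify all finite-dimensional limits via the above, using independence of $X$, $Y$, $\tilde W^Q$ to realize the limit on the stated product space.

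The positive-definiteness and explicit form of $Q$ reduces to a linear-algebra computation on the graph: $Q = \sum_i \alpha_i Q_i$ where $Q_i$ is built from the homology class of the boundary loop $\partial U_i$ in $H_1(\mathbb R^2/\Z^2)$, so $Q$ is a convex combination of rank-one-or-two symmetric matrices; non-degeneracy follows from the assumption that $\mathcal L$ genuinely forms a lattice dividing the plane (the loops $\partial U_i$ span $H_1(\T^2)$), together with all $\alpha_i > 0$.

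The main obstacle is the third step — making rigorous the claim that the crossing count of the separatrix bottleneck is asymptotically a deterministic multiple of $\eps^{-1/2}$ times the vertex local time of the limit diffusion, \emph{uniformly enough} to pass to a joint limit with the martingale CLT. This requires sharp control, as $\eps \downarrow 0$, of the excursions of $X^{x,\eps}$ near $\mathcal L_{\mathcal T}$: the time to traverse the neighborhood of a saddle is of order $\eps|\log\eps|$, excursions away from $\mathcal L_{\mathcal T}$ into the cell interiors must be shown to decorrelate the successive jump directions, and one must transfer the local-time asymptotics from the one-dimensional Hamiltonian-averaging picture (where $a^{-2}$ has a logarithmic singularity at $O$) to the two-dimensional process without losing the coupling. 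I expect this to lean heavily on the gluing-condition analysis of \cite{freidlin1993} and the local-time construction of \cite{freidlin2000sheu}, refined with quantitative exit-time estimates near hyperbolic fixed points.
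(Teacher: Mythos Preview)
Your overall strategy---decompose into separatrix excursions, relate the excursion count to the vertex local time, and apply a CLT to the cell-jump increments---matches the paper's. But the concrete setup contains an internal inconsistency that hides the main technical difficulty.

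You define $\sigma_k$ as entry times into a neighborhood of $\mathcal L_{\mathcal T}$ and $\tau_k$ as subsequent returns ``deep into the interior'', and then assert $\xi_k = N^\eps_{\tau_k} - N^\eps_{\tau_{k-1}} \in \{0,\pm e_1,\pm e_2\}$. These two claims are incompatible. Between one visit to $\mathcal L$ and the next return to the level $\{|H|=\delta\}$, the process passes by $O(\eps^{-1/2})$ saddles and hence changes cells $O(\eps^{-1/2})$ times; the displacement during a single such excursion is already of order $\eps^{-1/4}$, not $O(1)$. This is precisely the content of the paper's Theorem~\ref{CLT_for_first_hit}: the rescaled displacement during \emph{one} upcrossing of $V^\delta$ converges in law to $\sqrt\delta\,(1+a(\delta))\sqrt{\xi}\,N(0,Q)$, a variance-mixture Gaussian. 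The paper obtains this via an abstract lemma (Lemma~\ref{abstractlemma}) about a Markov chain on the separatrix with a geometric-like termination time of mean $O(\eps^{-1/2})$; your martingale FCLT would have to be applied at \emph{that} finer level (the paper's $\alpha_n,\beta_n$), not at the level of your $\sigma_k,\tau_k$. Once you make that correction, your step~3 becomes: show that the number of \emph{saddle-passings} by time $t$ is $\sim c\eps^{-1/2}L_t^{\Gamma(x)}$. The paper avoids having to prove this mixed-scale statement directly by a two-stage limit: fix $\delta$, let $\eps\downarrow 0$ to get the per-upcrossing law and the asymptotic independence of upcrossing displacements from downcrossing times (Lemma~\ref{indlemma}), and only afterwards send $\delta\downarrow 0$ and invoke the L\'evy downcrossing representation $\delta D_t^{y,\delta}\to L_t^y$ (Lemma~\ref{lemma_downcrossing}). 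Without this separation of scales your step~3 is substantially harder than you indicate.

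Your positive-definiteness argument is also not correct. The matrix $Q$ is not of the form $\sum_i \alpha_i Q_i$ with $Q_i$ read off from the homology class of $\partial U_i$; it is (up to a scalar) the asymptotic covariance of the Markov chain on the separatrix obtained by stopping $X^{x,\eps}$ at successive saddle-passings, and depends on the transition kernel of that chain, not just on which cells are adjacent. The paper's non-degeneracy proof is a coboundary argument: if $\langle e,Q e\rangle=0$ for some $e$, then $\langle e,g\rangle$ would be a coboundary for the chain, and one derives a contradiction from the fact that the limiting chain can reach any periodicity cell with positive probability. Your homology argument would at best show that the \emph{support} of the cell-jump distribution spans $\Z^2$, which is necessary but not sufficient for non-degeneracy of the CLT covariance of a non-i.i.d.\ chain.
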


\begin{remark}
One might also consider the process $X_t^{x,\varepsilon}$ on slightly shorter timescales.
 At first glance, this may appear uninteresting since, for a generic starting point $x$, this
would simply lead to a fast rotation on the level set $\{y\,:\, H(y) = H(x)\}$. However,
if we consider a starting point on (or sufficiently close to) the separatrix,  one expects to see
a non-trivial limiting process also at these shorter scales. It is natural to conjecture that
this process, after appropriate re-scaling, is given by $\tilde {W}^Q_{L_t}$, where $L_t$ is the local time of a Brownian motion at the vertex of a star-shaped graph,
with the time further rescaled to account for the logarithmic slow-down near
the saddle points.  A similar process already arose as the scaling limit for heavy-tailed
trap models in \cite{benarous2007}.
\end{remark}

It is well known that the solution of \eqref{elliptic_problem1} can be represented as
\[
u^{\varepsilon,R}(x)= \E\int_0^{\tau_{\partial D_R}(X^{x,\varepsilon}_.)}f(X_s^{x,\varepsilon}/R)\,ds,
\]
where $\tau_{\partial D_R}(\omega)$ is the first hitting of the boundary of $D_R$ by the trajectory $\omega\in \CC([0,T];\mathbb{R}^2)$. The essence of the averaging and transition regimes can be captured by the mechanism of the exit of the process $X_t^{\varepsilon}$ from $D_R$ (see \cite{Iyer2013}).

In the averaging regime, the process $X^{x,\varepsilon}_t$ revolves many times roughly along the flow lines within one cell, but once the separatrix is reached, the process exits $D_R$ quickly (as intuitively follows from the typical fluctuation of the limiting Brownian motion after one notices that the local time immediately becomes non-zero after the process reaches the boundary.)

On the other hand, in the homogenization regime, the interiors of many cells are visited before the process exits $D_R$, and there is enough time for the process $L^{\Gamma(x)}_t$ to start growing nearly linearly in $t$, and therefore an overall Brownian behavior to set in. The mean exit time becomes infinite in the limit.

In the intermediate transition regime, the time required to leave $D_R$ remains finite and is of the same order as the local time, although $L^{\Gamma(x)}_t$ is not directly proportional to $t$ in this regime.
We will apply Theorem~\ref{main_thm} in order to obtain the following asymptotic results for the solution of equation \eqref{elliptic_problem1}.
The precise statement of our results from a PDE perspective
can be summarized by the following theorem.

\begin{theorem}\label{Exit_time_results}
Let  $\varepsilon \downarrow 0$ and $R = R(\varepsilon) \uparrow \infty$  in \eqref{elliptic_problem1}.
\begin{enumerate}
\item (Averaging regime) If $R\varepsilon^{1/4}\downarrow  0$, then
\[
u^{\varepsilon, R}(x) \rightarrow f(0) \cdot \E \overline{\tau}_{0}(Y_{\cdot}^{\Gamma(x)})\;,
\]
where  $\overline{\tau}_{0}$ is the first time when a process on $G$ hits the interior vertex.
\item (Transition regime)
If $R\varepsilon^{1/4}\to C \in(0,\infty)$, then
\[
u^{\varepsilon, R}(x) \to \E \int_0^{\tau_{\partial D}} f (\tilde{W}_{L^{\Gamma(x)}_t}^{Q/C^2} )\, dt\;,
\]
with $Q$ as in Theorem~\ref{main_thm}, where $\tau_{\partial D}$ is the first time the process $\tilde{W}_{L^{\Gamma(x)}_t}^{Q/C^2}$ hits the boundary of $D$.
\item (Homogenization regime) There is a constant $c > 0$ such that
if $R\varepsilon^{1/4}\uparrow\infty$, then
\begin{equation}\label{eq:hom_regime}
\varepsilon^{-1/2} R^{-2}  u^{\varepsilon, R}(x) \to  \E \int_0^{\tau_{\partial D}} f(\tilde{W}^{cQ}_t)\, dt\;,
\end{equation}
where $ \tilde{W}^{cQ}_t$ is a Brownian motion with covariance $c Q$ and $\tau_{\partial D}$ is the first time the process $\tilde{W}^{cQ}_t$ hits the boundary of $D$.
\end{enumerate}
\end{theorem}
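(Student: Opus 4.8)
\medskip
\noindent\emph{Proof plan.}
All three statements will be deduced from the stochastic representation after a single rescaling. Put $\rho=\rho(\varepsilon)=R\varepsilon^{1/4}$ and $\tilde X^\varepsilon_t=\varepsilon^{1/4}X^{x,\varepsilon}_t$; since $X^{x,\varepsilon}_t\in D_R$ exactly when $\tilde X^\varepsilon_t\in\rho D$, with $X^{x,\varepsilon}_t/R=\tilde X^\varepsilon_t/\rho$ and the same time parametrisation, the representation becomes
\[
u^{\varepsilon,R}(x)=\E\int_0^{\tau_{\partial(\rho D)}(\tilde X^\varepsilon)}f\!\left(\tilde X^\varepsilon_s/\rho\right)ds,\qquad
\tau_{\partial(\rho D)}(\tilde X^\varepsilon)=\tau_{\partial D_R}(X^{x,\varepsilon}).
\]
By \thmref{main_thm}, $\tilde X^\varepsilon\Rightarrow Z:=\tilde W^Q_{L^{\Gamma(x)}_\cdot}$ in $\CC([0,\infty);\mathbb{R}^2)$. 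I would first isolate two analytic facts that feed into every regime. The first is a continuity property of the exit functional: for a bounded smooth domain $D'$, the map $\omega\mapsto\big(\tau_{\partial D'}(\omega),\int_0^{\tau_{\partial D'}(\omega)}f(\omega_s)\,ds\big)$ should be continuous at $\PROB$-almost every path of each limiting process appearing below. Here I would use that $L^{\Gamma(x)}_t\to\infty$ a.s.\ — because $Y^{\Gamma(x)}$ is recurrent on the compact graph $G$ and its local time at $O$ is unbounded — so that the clock in $Z$ is a.s.\ a continuous surjection of $[0,\infty)$ onto itself and $Z$ is a time change of a nondegenerate planar Brownian motion; continuity then reduces to the classical regularity of boundary points of smooth domains for planar Brownian motion. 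The second is uniform integrability of the exit times, $\sup_\varepsilon\E[\tau_{\partial D_R}(X^{x,\varepsilon})^{1+\delta}]<\infty$ in the averaging and transition regimes and $\sup_\varepsilon\E[(\rho^{-2}\tau_{\partial D_R}(X^{x,\varepsilon}))^{1+\delta}]<\infty$ in the homogenization regime, which I would get by constructing an $\varepsilon$-uniform supersolution of $\mathcal L_\varepsilon g\le -1$ on $D$ for the suitably rescaled generator $\mathcal L_\varepsilon$. With these in hand, weak convergence of $\tilde X^\varepsilon$ together with the boundedness of $f$ upgrades to convergence of the relevant integral expectations, and it remains to identify the limits.

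\medskip
\noindent\emph{Transition regime.} When $\rho\to C$ we have $\rho D\to CD$ and the above gives $u^{\varepsilon,R}(x)\to\E\int_0^{\tau_{\partial(CD)}(Z)}f(Z_s/C)\,ds$. I would then rewrite this: Brownian scaling gives $C^{-1}\tilde W^Q_\cdot\stackrel{d}{=}\tilde W^{Q/C^2}_\cdot$, and since this commutes with the independent time change $L^{\Gamma(x)}$, $C^{-1}Z\stackrel{d}{=}\tilde W^{Q/C^2}_{L^{\Gamma(x)}_\cdot}$ while $\tau_{\partial(CD)}(Z)=\tau_{\partial D}(C^{-1}Z)$; substituting yields exactly $\E\int_0^{\tau_{\partial D}}f\big(\tilde W^{Q/C^2}_{L^{\Gamma(x)}_t}\big)\,dt$.

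\medskip
\noindent\emph{Averaging regime.} When $\rho\to 0$ the domain $\rho D$ collapses to $\{0\}$ on the scaling-limit scale, while $\tilde X^\varepsilon_0=\varepsilon^{1/4}x\to0$. The key is that the exit time is, to leading order, the first hitting time $\htau^\varepsilon$ of $O$ by $\Gamma(X^{x,\varepsilon})$ — equivalently, the first time $X^{x,\varepsilon}$ reaches the separatrix of its initial cell. On one side, before $\htau^\varepsilon$ the trajectory stays in a fixed bounded set, so $|\tilde X^\varepsilon_t|=O(\varepsilon^{1/4})=o(\rho)$ and $\tilde X^\varepsilon$ has not left $\rho D$; hence $\tau_{\partial D_R}(X^{x,\varepsilon})\ge\htau^\varepsilon$, and $\htau^\varepsilon\Rightarrow\overline{\tau}_0(Y^{\Gamma(x)})$ by \cite[Chapter~8]{freidlin2012random}. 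On the other side, fixing $\delta>0$, on the event $\{\tau_{\partial D_R}(X^{x,\varepsilon})>\htau^\varepsilon+\delta\}$ the path $\tilde X^\varepsilon$ stays inside the shrinking set $\rho D$ over $[\htau^\varepsilon,\htau^\varepsilon+\delta]$; passing to the limit, letting the diameter of $\rho D$ tend to zero, and using that $L^{\Gamma(x)}$ becomes strictly positive right after $\overline{\tau}_0$ so that $\sup_{u\le L^{\Gamma(x)}_{\overline{\tau}_0+\delta}}|\tilde W^Q_u|>0$ a.s., shows this event has probability $\to0$. Hence $\tau_{\partial D_R}(X^{x,\varepsilon})\Rightarrow\overline{\tau}_0(Y^{\Gamma(x)})$, and in mean by uniform integrability. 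Splitting the integral at $\htau^\varepsilon$: on $[0,\htau^\varepsilon]$ one has $|X^{x,\varepsilon}_s/R|\le\mathrm{const}/R\to0$, so by continuity of $f$ at $0$ that contribution tends to $f(0)\,\E\overline{\tau}_0(Y^{\Gamma(x)})$ (finite, since on each edge $L_i$ is a nondegenerate diffusion on a bounded interval with the usual finite-expected-return behaviour at the exterior vertex), while the contribution of $[\htau^\varepsilon,\tau_{\partial D_R}]$ is at most $\|f\|_\infty\,\E(\tau_{\partial D_R}(X^{x,\varepsilon})-\htau^\varepsilon)\to0$. This gives $u^{\varepsilon,R}(x)\to f(0)\,\E\overline{\tau}_0(Y^{\Gamma(x)})$.

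\medskip
\noindent\emph{Homogenization regime, and the main obstacle.} When $\rho\to\infty$ the relevant scale is $\rho^2=R^2\varepsilon^{1/2}$. Setting $\hat Z^\varepsilon_u:=\rho^{-1}\tilde X^\varepsilon_{\rho^2u}$, one has $\tau_{\partial(\rho D)}(\tilde X^\varepsilon)=\rho^2\,\tau_{\partial D}(\hat Z^\varepsilon)$ and
\[
\varepsilon^{-1/2}R^{-2}\,u^{\varepsilon,R}(x)=\E\int_0^{\tau_{\partial D}(\hat Z^\varepsilon)}f(\hat Z^\varepsilon_u)\,du,
\]
so, given the continuity and uniform-integrability facts for the fixed domain $D$, it suffices to prove $\hat Z^\varepsilon\Rightarrow\tilde W^{cQ}$ for some $c>0$. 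For the rescaled limit process this is clear: by the ergodic theorem for the positive-recurrent diffusion $Y^{\Gamma(x)}$ on $G$ applied to the occupation-time identity in Definition~\ref{def_local_time}(3), $\rho^{-2}L^{\Gamma(x)}_{\rho^2u}\to c\,u$ locally uniformly in probability with $c>0$ determined by the invariant density of $Y$ and the coefficient $a$; with $\rho^{-1}\tilde W^Q_{\rho^2\cdot}\stackrel{d}{=}\tilde W^Q_\cdot$ and continuity of composition this gives $\rho^{-1}Z_{\rho^2\cdot}\Rightarrow\tilde W^{cQ}$. The genuinely hard step — and the main obstacle of the whole theorem — is to pass from $Z$ to $\hat Z^\varepsilon$: \thmref{main_thm} only controls $\tilde X^\varepsilon$ on bounded time intervals, so its conclusion cannot simply be rescaled once $\rho(\varepsilon)\to\infty$. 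I would attack this by combining the coupling that underlies the proof of \thmref{main_thm} with the classical homogenization theorem for $X^{x,\varepsilon}$ at fixed $\varepsilon$ and the asymptotics $\varepsilon^{1/2}\bar A(\varepsilon)\to cQ$ of the effective diffusivity of the cellular flow, in a form uniform all the way down to the borderline $\rho\to\infty$ (a range not reached by \cite{Iyer2013}); the required uniform homogenization estimate — both for the functional convergence and for the exit-time moment bound — is where the real work lies, everything else being standard weak-convergence bookkeeping.
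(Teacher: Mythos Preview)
Your overall architecture matches the paper's: all three regimes are read off from the stochastic representation via \thmref{main_thm}, the averaging regime is handled by splitting at the first hitting time of the separatrix, the transition regime by continuity of the exit functional under weak convergence, and the homogenization regime by a uniform-in-$\varepsilon$ functional CLT. In particular, your identification of the main obstacle in Part~3 is exactly right, and the paper resolves it the same way you propose --- by invoking the fixed-$\varepsilon$ CLT $\varepsilon^{1/4}X^{x,\varepsilon}_{k\cdot}/\sqrt{k}\Rightarrow \tilde W^{D(\varepsilon)}$ from \cite{Koralov2004} with $D(\varepsilon)\to D_0=cQ$, and then arguing (by revisiting that proof with the machinery of \lemref{abstractlemma}) that this convergence is \emph{uniform} in $\varepsilon$, which is precisely what allows $k=k(\varepsilon)=\varepsilon^{1/2}R^2\to\infty$.

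The one place where your plan diverges, and where you should be careful, is the uniform integrability step. You propose to get $\sup_\varepsilon \E\tau^{1+\delta}<\infty$ from an $\varepsilon$-uniform supersolution of $\mathcal L_\varepsilon g\le -1$. For the rescaled generators here the drift is of order $\varepsilon^{-1}$ (or $R\varepsilon^{-1/2}$ after the homogenization rescaling), so no smooth test function independent of the fast variable will work pointwise; you would need a corrector-type construction, i.e.\ essentially a quantitative homogenization estimate, which is exactly the hard input you are trying to avoid. The paper sidesteps this entirely: it obtains exponential tails for $\tau_{\partial D_R}$ directly from the limit theorem. Concretely, it shows $\sup_{x\in\mathcal L_{\mathcal T}}\P(\tau_{\partial D_R}(X^{x,\varepsilon})>K)\to 0$ by comparing with $\tau_{B(0,\delta)}(\varepsilon^{1/4}X^{x,\varepsilon})$ and invoking \thmref{main_thm} together with $\P(L^0_t>0)=1$, combines this with the averaging-principle bound $\sup_x\P(\tau_{\mathcal L}(X^{x,\varepsilon})>1)<1$, and then uses the Markov property to upgrade to uniform exponential decay in $K$. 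This gives both the UI needed in Part~2 and the sharper statement $\E(\tau_{\partial D_R}-\tau_{\mathcal L})\to 0$ needed in Part~1, without any PDE supersolution. Your Part~1 argument that the overshoot event $\{\tau_{\partial D_R}>\hat\tau^\varepsilon+\delta\}$ has vanishing probability is correct in spirit, but to make it rigorous you will in any case need the uniform-in-starting-point version of \thmref{main_thm} applied after the strong Markov property at $\hat\tau^\varepsilon$ --- which is again the paper's route.
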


\begin{remark}
Note that there is no $x$ dependence on the right hand side of (\ref{eq:hom_regime}). If we scale the problem back to the original domain $D$ and then normalize appropriately, the above result gives us that the limit is the solution of a constant coefficient Dirichlet problem on $D$ evaluated at the origin. To get the values of this solution at another point $x$, we must apply the result to the shifted domain $D-x$. This way we can prove that
\[
(\varepsilon^{1/2}R^2)^{-1}u^{\varepsilon,R}(Rx)\rightarrow \E\int_0^{\tau_{\partial D}}f(x+\tilde{W}^{cQ}_t)\,dt\qquad\textrm{ as }~\varepsilon\downarrow 0,R\uparrow\infty,
\]
which contains the classical homogenization result. Here $\tau_{\partial D}$ is the first time when the process $x+\tilde{W}^{cQ}_t$ hits the boundary of $D$.
\end{remark}

\begin{remark}
Although it is not an aim of the present paper, Theorem~\ref{main_thm} can also be used to derive asymptotics for PDEs with periodic right hand side  and for parabolic problems (using the well known probabilistic representations).  These techniques are suitable  for investigating equations  with non-zero boundary data as well.
\end{remark}

This paper is organized as follows. In Section~\ref{Section_Displacement_Separatrix}, we derive a limit theorem to describe the displacement that occurs when the process leaves the interior of a cell and comes close to the separatrix. This, combined with a L\'evy-type downcrossing representation of the local time at the interior vertex, will help us prove Theorem \ref{main_thm} in Section \ref{secttt}. Section \ref{Section_Exit_Time_Proofs} is dedicated to the proof of Theorem~\ref{Exit_time_results}.

\section{Displacement when the process is near the separatrix}\label{Section_Displacement_Separatrix}
In this section we study the behavior of the process when it is close to the separatrix. The process spends most of the time in the interiors of the cells where no cell changes are possible. However, when the process leaves the cell interior, rapid displacement occurs along the separatrix. We will show what happens during one excursion, i.e., between the time when the process hits the separatrix and the time when it goes back to the interior of the domain (the exact meaning of the latter will be explained below).

First, we need some notations. For any two saddle points, introduce $\gamma(A_i,A_j)$ as the set of points in $\mathcal{L}_{\mathcal{T}}$ that get taken to $A_j$ by the flow $\dot x=v(x)$ and to $A_i$ by the flow $\dot x=-v(x)$.  Since we assumed that the separatrices do not form loops,
we always have $\gamma(A_i,A_i)=\emptyset$.
In a neighborhood of each curve $\gamma(A_i,A_j)$, we can consider a smooth coordinate change $(x_1,x_2)\to(H,\theta)$  defined by the conditions $|\nabla\theta|=|\nabla H|$ and $\nabla\theta\perp\nabla H$ on $\gamma(A_i,A_j)$. This way $\theta$ is defined up to multiplication by $-1$ and up to an additive constant.

Let $V^{\delta}=\{x\in\mathbb{R}^2:|H(x)|\leq \delta\}$ and consider the same change of coordinates in $V^{\delta}\cap\overline{U}_k$, in which case $\theta\in[0,\int_{\partial U_k}|\nabla H|dl]$ and the endpoints of the interval are identified. Using these new coordinates, we can define what it means for the process to pass a saddle point. Namely, let
\[
B(A_i,U_k)=\{x\in V^{\delta}\cap\overline{U}_k\,:\, \theta(x)=\theta(A_i)\}\;,\qquad B(A_i)=\bigcup_{k:A_i\in\partial U_k}B(A_i,U_k).
\]

Let $\pi:\mathbb{R}^2\to \mathcal{T}$ be the quotient map from the plane to the torus and, for simplicity, let us denote $\pi(V^{\delta})$ by $V^{\delta}$ again. Introduce the stopping times $\alpha_0^{x,\delta,\varepsilon}=0$, $\beta_0^{x,\delta,\varepsilon}=\inf\{t\geq 0\,:\,X_t^{x,\varepsilon}\in\mathcal{L}\}$ and recursively define
\[
\alpha_n^{x,\delta,\varepsilon}=\inf\Big\{t\geq\beta_{n-1}^{x,\delta,\varepsilon}\,:\,\pi (X_t^{x,\varepsilon})\in\bigcup_{k\neq i}B(A_k)\cup \partial V^{\delta}~~{\rm if}~\pi(X_{\beta_{n-1}^{x,\delta,\varepsilon}}^{x,\varepsilon})\in\gamma(A_i,A_j)\Big\}
\]
and $\beta_n^{x,\delta,\varepsilon}=\inf\{t\geq\alpha_n^{x,\delta,\varepsilon}: X_t^{x,\varepsilon}\in\mathcal{L}\}$. In other words, $\alpha_n^{x,\delta,\varepsilon}$ is the first time after $\beta_{n-1}^{x,\delta,\varepsilon}$ that the process either hits $\partial V^{\delta}$, or goes past a saddle point different from the one behind $X_{\beta_{n-1}^{x,\delta,\varepsilon}}^{x,\varepsilon}$.

We introduce another pair of sequences of stopping times corresponding to successive visits to $\mathcal{L}$ and $\partial V^\delta$. Namely,
let $\mu^{x,\delta,\varepsilon}_0 = 0$,   $\sigma^{x,\delta,\varepsilon}_0 = \beta_0^{x,\delta,\varepsilon}$, and recursively define
\[
\mu^{x,\delta,\varepsilon}_n  = \inf \{t \geq \sigma^{x,\delta,\varepsilon}_{n-1}\,:\, X_t^{x, \varepsilon} \in \partial V^\delta \},~~~\sigma^{x,\delta,\varepsilon}_n  = \inf \{t \geq \mu^{x,\delta,\varepsilon}_{n}\,:\, X_t^{x, \varepsilon} \in \mathcal{L} \}.
\]

Let
\[
S^{x,\delta, \varepsilon}_n  = X_{\sigma^{x, \delta, \varepsilon}_n}^{x, \varepsilon} - X_{\sigma^{x, \delta, \varepsilon}_{n-1}}^{x, \varepsilon},~n \geq 1,~~~~
T^{x,\delta, \varepsilon}_n = \sigma^{x, \delta, \varepsilon}_n - \mu^{x, \delta, \varepsilon}_n,~n \geq 0,
\]
be the displacement between successive visits to $ \mathcal{L}$ and the time spent on the $n$-th downcrossing of $V^\delta$, respectively. We will use the following notion of uniform weak
convergence for probability measures in the sequel.

\begin{definition}
Given two families of random variables $f^{x, \varepsilon}$ and $g^x$ with values in a metric space $M$ and indexed by a parameter $x$, we will say
that $f^{x,\varepsilon}$ converge to $g^x$ in distribution uniformly in $x$ if
\[
\E\varphi(f^{x,\varepsilon}) \rightarrow \E\varphi(g^x)\;,
\]
as $\varepsilon \rightarrow 0$, uniformly in $x$ for each $\varphi \in \CC_b(M)$.
\end{definition}

Let $\eta^{x, \delta, \varepsilon} $ be the random vector with values in $\{1,\ldots,n\}$ defined by
\[
\eta^{x, \delta, \varepsilon} =  i~~{\rm if}~~ X_{ \mu_1^{x,\delta,\varepsilon}}^{x, \varepsilon} \in U_i,~~i =1,\ldots,n,
\]
i.e., $\eta^{x, \delta, \varepsilon} =  i$ if the process ends up in $U_i$ after the first upcrossing of $V^\delta$.
The main result of this section is

\begin{theorem}\label{CLT_for_first_hit}
There are a  $2\times 2$ non-degenerate matrix $Q$, a vector $(p_1,\ldots,p_n)$, and functions $a(\delta)$, $b_1(\delta),\ldots,
b_n(\delta)$ that go to zero as $\delta \to 0$, such that
\begin{equation} \label{mne}
(\varepsilon^{1/4} S^{x, \delta, \varepsilon}_1, \eta^{x, \delta, \varepsilon}) \rightarrow \big(\sqrt{\delta}(1+a(\delta))\sqrt{\xi} N(0,Q), \eta^\delta\big)
\end{equation}
in distribution as $\eps \downarrow 0$, uniformly in $x \in \mathcal{L}$ for all sufficiently small $\delta > 0$, where $\xi$ is an exponential random variable with parameter one, $N$ is a two dimensional normal with covariance matrix $Q$, independent of $\xi$, and $\eta^\delta$ is a random vector with values in $\{1,\ldots,n\}$ independent of $\xi$ and $N$ such that $\P(\eta^\delta = i) = p_i + b_i(\delta)$.
\end{theorem}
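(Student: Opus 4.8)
\emph{Reduction to a random walk over cell changes.}
The plan is to rewrite $\varepsilon^{1/4}S^{x,\delta,\varepsilon}_1$ as a rescaled random walk over the cell changes made during the excursion and to control the number of these changes through the local time of the limiting diffusion $Y$ on $G$. The first point is that, up to errors that become negligible once we multiply by $\varepsilon^{1/4}$, the displacement is produced only by cell changes: during $[\mu^{x,\delta,\varepsilon}_1,\sigma^{x,\delta,\varepsilon}_1]$ the process stays inside one cell (it reaches $\mathcal{L}$ only at the terminal instant), so $X^{x,\varepsilon}_{\sigma^{x,\delta,\varepsilon}_1}-X^{x,\varepsilon}_{\mu^{x,\delta,\varepsilon}_1}$ is bounded uniformly; and during $[\sigma^{x,\delta,\varepsilon}_0,\mu^{x,\delta,\varepsilon}_1]$ the contributions of the saddle passages recorded by the $\beta_n^{x,\delta,\varepsilon}$ that occur at heights $|H|$ above a fixed multiple of $\sqrt{\varepsilon}$ telescope, over each lap around a cell, to a uniformly bounded remainder (a closed loop contributes no net displacement to leading order). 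Actual cell changes take place only in the layer $\{|H|\lesssim\sqrt{\varepsilon}\}$, where the hyperbolic linearisation $H\approx x_1x_2$ near a saddle and the $\varepsilon^{-1}$ scaling of the flow show that a saddle neighbourhood is traversed in a time of order $\varepsilon\log(1/\varepsilon)$ and that the incoming branch is switched to the opposite cell with probability bounded away from $0$ and $1$. Writing $M=M^{x,\delta,\varepsilon}$ for the number of cell changes in $[\sigma^{x,\delta,\varepsilon}_0,\mu^{x,\delta,\varepsilon}_1]$ and $Z_1,\dots,Z_M\in\mathbb{R}^2$ for the corresponding bounded lattice-type jumps, we get $\varepsilon^{1/4}S^{x,\delta,\varepsilon}_1=\varepsilon^{1/4}\sum_{k=1}^{M}Z_k+\varepsilon^{1/4}R^{x,\delta,\varepsilon}$ with $R^{x,\delta,\varepsilon}$ bounded uniformly in $x,\delta,\varepsilon$, so it is enough to treat $\sum_{k=1}^{M}Z_k$ jointly with $\eta^{x,\delta,\varepsilon}$, provided $M\to\infty$.

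\emph{Counting the cell changes.}
I would next show that $\varepsilon^{1/2}M^{x,\delta,\varepsilon}\to\kappa L$ in distribution as $\varepsilon\downarrow0$, uniformly in $x\in\mathcal{L}$, where $\kappa>0$ is an explicit constant and $L$ is the local time at $O$ accumulated by $Y$ up to its first hitting time of height $\delta$. This uses two ingredients. First, the occupation time of $\{|H(X^{x,\varepsilon}_{\cdot})|\le\sqrt{\varepsilon}\}$ over $[\sigma^{x,\delta,\varepsilon}_0,\mu^{x,\delta,\varepsilon}_1]$ is, to leading order, $c\sqrt{\varepsilon}\log(1/\varepsilon)$ times the local time of $H(X^{x,\varepsilon}_{\cdot})$ at $O$ — by the occupation-time formula together with $a(i,y)^2\sim c'/|\log y|$ as $y\to0$ — while each cell change consumes a time of order $\varepsilon\log(1/\varepsilon)$, which produces the factor $\varepsilon^{-1/2}$; and since $\Gamma(X^{x,\varepsilon}_{\cdot})\to Y$ with no time change, the stopping time $\mu^{x,\delta,\varepsilon}_1$ and the corresponding local time converge to the analogous quantities for $Y$. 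Second, excursion theory for the diffusion on the star-shaped graph near $O$ identifies $L$: the excursions of $Y$ away from $O$ form a Poisson point process in the local time clock, and since the excursion measure assigns mass $c''/\delta$ to the excursions of height at least $\delta$ — up to a correction coming from the fact that the scale function is only asymptotically linear at $O$ — the local time at which the first such excursion occurs satisfies $L\stackrel{d}{=}\delta(1+\tilde a(\delta))\xi$ with $\xi$ exponential of parameter one and $\tilde a(\delta)\to0$. Hence $\varepsilon^{1/2}M^{x,\delta,\varepsilon}\to\kappa\delta(1+\tilde a(\delta))\xi$.

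\emph{Central limit theorem, decoupling and conclusion.}
The jumps $Z_k$ are not independent, but their conditional laws depend only on the current cell and the saddle just crossed and mix geometrically along the sequence of cell changes, so a central limit theorem for weakly dependent additive functionals gives $M^{-1/2}\sum_{k=1}^{M}Z_k\to N(0,\Sigma)$ in distribution, for a fixed non-degenerate $\Sigma$, jointly with and asymptotically independent of both $\varepsilon^{1/2}M^{x,\delta,\varepsilon}$ and $\eta^{x,\delta,\varepsilon}$. The same geometric forgetting of the initial data yields the uniformity in $x\in\mathcal{L}$ and the convergence $\eta^{x,\delta,\varepsilon}\to\eta^\delta$ in distribution with $\P(\eta^\delta=i)=p_i+b_i(\delta)$, $b_i(\delta)\to0$; independence of $\eta^\delta$ from $\xi$ is Poisson thinning (the branch entered by the first height-$\delta$ excursion is independent of the local time clock), and independence from the Gaussian limit again follows from the forgetting. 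Combining,
\[
\varepsilon^{1/4}S^{x,\delta,\varepsilon}_1=\big(\varepsilon^{1/2}M^{x,\delta,\varepsilon}\big)^{1/2}\Big(M^{-1/2}\sum_{k=1}^{M}Z_k\Big)+\varepsilon^{1/4}R^{x,\delta,\varepsilon}\ \longrightarrow\ \sqrt{\kappa}\,\sqrt{\delta(1+\tilde a(\delta))}\,\sqrt{\xi}\,N'
\]
in distribution, $N'\sim N(0,\Sigma)$; setting $Q=\kappa\Sigma$, so that $\sqrt{\kappa}\,N'\stackrel{d}{=}N(0,Q)$, and $1+a(\delta)=\sqrt{1+\tilde a(\delta)}$, the right-hand side is $\sqrt{\delta}\,(1+a(\delta))\sqrt{\xi}\,N(0,Q)$, jointly with $\eta^{x,\delta,\varepsilon}\to\eta^\delta$ and $p_i=\lim_{\delta\to0}\P(\eta^\delta=i)$; this is \eqref{mne}.

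\emph{Where the difficulty lies.}
The heart of the argument is the count of saddle passages: proving the $\varepsilon^{-1/2}$ order of $M^{x,\delta,\varepsilon}$ and, together with it, the asymptotic independence in the limit of the magnitude $\sqrt{\xi}$ (carried by the local time clock), the direction $N$ (carried by the cell-change walk), and the label $\eta^\delta$, uniformly in the starting point $x$. This rests on a sufficiently sharp description of a single passage through a saddle neighbourhood at the critical height $|H|\sim\sqrt{\varepsilon}$ — the law of the exit point, onto $\partial V^{\delta}$ or onto the adjacent branch, and the branch-switching probabilities — which is the technical core of the section.
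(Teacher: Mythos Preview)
Your route is genuinely different from the paper's. The paper does not pass through the limiting graph diffusion $Y$ or its local time at all for this theorem; instead it packages the excursion between $\mathcal{L}$ and $\partial V^\delta$ as a Markov chain with small killing probability. Concretely, it records the process at the successive times $\beta_k^{x,\delta,\varepsilon}$ (returns to $\mathcal{L}$ after a saddle passage), obtaining a chain $Z_k^{x,\varepsilon}$ on a compactified state space $M=X\sqcup C_1\sqcup\ldots\sqcup C_n$ that is absorbed in $C_i$ when the trajectory reaches $\partial V^\delta\cap U_i$. The displacement $g(Z_k)$ per step is bounded and has mean zero under the invariant measure; the key analytic input, taken from \cite{Koralov2004}, is that the per-step absorption probability is $\sqrt{\varepsilon}\,h_i^\delta(x)(1+o(1))$ with $\int h_i^\delta\,d\lambda^0=\delta^{-1}(\bar p_i+\bar b_i(\delta))$. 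An abstract lemma then shows that for any chain satisfying a uniform Doeblin condition and these killing asymptotics, the absorption time is $\sim\varepsilon^{-1/2}$ and $(\varepsilon^{1/4}\sum_{k\le\tau}g(Z_k),\,\text{exit label})\Rightarrow(\sqrt{\xi/\int J\,d\lambda^0}\,N(0,\bar Q),\,F_2)$ with $F_2$ independent. The exponential $\xi$ thus arises directly from the near-geometric absorption time of the chain, and the $\sqrt\delta$ factor from $\int J\,d\lambda^0\sim(\sum\bar p_i)/\delta$; the excursion theory of $Y$ plays no role. What this buys is that the entire proof reduces to checking six concrete hypotheses on the chain, all available from \cite{Koralov2004}.

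Your approach is intuitively attractive but has a real gap at the step $\varepsilon^{1/2}M^{x,\delta,\varepsilon}\to\kappa L$. The Freidlin--Wentzell averaging principle gives only weak convergence of $\Gamma(X^{x,\varepsilon}_\cdot)$ to $Y^{\Gamma(x)}_\cdot$ in $\CC([0,T];G)$, and neither the number of cell changes nor the local time at $O$ is a continuous functional of the path, so this convergence does not transfer automatically. Your occupation-time heuristic also needs care: the occupation formula for the semimartingale $H(X^{x,\varepsilon}_t)$ involves the quadratic variation $|\nabla H(X^{x,\varepsilon}_t)|^2\,dt$, not Lebesgue time, so the passage from ``time spent in $\{|H|\le\sqrt\varepsilon\}$'' to ``$c\sqrt\varepsilon\log(1/\varepsilon)\times(\text{local time at }0)$'' requires controlling where on the separatrix the process sits, which is exactly the fine information encoded in the $\lambda^0$-ergodic averages of \cite{Koralov2004}. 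In short, making your count rigorous seems to require the same per-step $\sqrt\varepsilon$ escape estimate that drives the paper's argument, so the local-time detour does not avoid the hard step; it relocates it and adds a nontrivial local-time convergence statement on top.
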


Before proving Theorem~\ref{CLT_for_first_hit}, let us briefly discuss one implication. Let $T^{x, \varepsilon} := T^{x,\delta, \varepsilon}_0$ be the time it takes the process starting at $x$ to reach the separatrix. Let $\bar{T}^y$ be the time it takes the limiting process $Y^y_t$ on the graph
to reach the vertex $O$. By the averaging principle \cite{freidlin1993}, $T^{x, \varepsilon} \rightarrow \bar{T}^{\Gamma(x)}$ in distribution uniformly in $x \in \mathcal{T}$. This, together with Theorem~\ref{CLT_for_first_hit} and the strong Markov property of the process imply the following lemma.

\begin{lemma} \label{indlemma} For fixed $m$ and $\delta$, the random vectors
\[
( T^{x,\delta, \varepsilon}_0,  \varepsilon^{1/4}  S^{x, \delta, \varepsilon}_1, T^{x,\delta, \varepsilon}_1,\ldots,T^{x,\delta, \varepsilon}_{m-1}, \varepsilon^{1/4}  S^{x,\delta, \varepsilon}_m)
\]
converge, as $\varepsilon \downarrow 0$, to a random vector with independent components. The limiting distribution for each of the components
$\varepsilon^{1/4} S^{x, \delta, \varepsilon}_1,\ldots,\varepsilon^{1/4} S^{x, \delta, \varepsilon}_m$ is given by Theorem~\ref{CLT_for_first_hit}, i.e., it is equal to the distribution of
$\sqrt{\delta}(1+a(\delta))\sqrt{\xi} N(0,Q)$. The limiting distribution of $ T^{x,\delta, \varepsilon}_0$ is the distribution of $\bar{T}^{\Gamma(x)}$. The limiting distribution for each of the components  $ T^{x,\delta, \varepsilon}_1,\ldots, T^{x,\delta, \varepsilon}_{m-1}$ is equal to the distribution of $\bar{T}^{\zeta}$, where $\zeta$ is a random initial point for the process on the graph, chosen to be at distance $\delta$ from the vertex $O$, in such a way that $\zeta$ belongs to the $i$-th edge with probability $p_i + b_i(\delta)$.
\end{lemma}

We will prove Theorem~\ref{CLT_for_first_hit} by proving a more abstract lemma on Markov chains with a small probability of termination at each step, and demonstrating that the conditions of the lemma are satisfied in the situation of Theorem~\ref{CLT_for_first_hit}.

Let $M$ be a locally compact separable metric space which can be written as a disjoint union
\[
M = X \sqcup C_1 \sqcup \ldots \sqcup C_n\;,
\]
where the sets $C_i$ are closed. Let $p_{\varepsilon}(x,dy)$, $0\leq\varepsilon\leq\varepsilon^0$, be a family of transition probabilities on $M$
and let $g \in \CC_b(M,\R^2)$. We assume that the following properties hold:

\begin{enumerate}[(1)]
\item $p_0(x, X) = 1$ for all $x \in M$ and $p_\varepsilon(x, X) = 1$ for all $x \in M \setminus X$.
\item $p_0(x,dy)$ is weakly Feller, that is the map $x \mapsto \int_Mf(y)p_0(x,dy)$ belongs
to $\CC_b(M)$ if $f\in \CC_b(M)$.
\item There exist bounded continuous functions $h_1,\ldots,h_n: X \rightarrow [0,\infty)$ such that
\[
\varepsilon^{-\frac{1}{2}} p_\varepsilon(x,C_i) \rightarrow h_i(x),~~{\rm uniformly}~{\rm in}~x \in X,
\]
while $\sup_{x \in X} |\varepsilon^{-\frac{1}{2}} p_\varepsilon(x,C_i)| \leq k$ for some positive constant $k$. We also have
\[
J(x): = h_1(x) + \ldots + h_n(x) > 0~~{\rm for}~ x \in X.
\]
\item $p_{\varepsilon}(x,dy)$ converges weakly to $p_0(x,dy)$ as $\varepsilon \rightarrow 0$, uniformly in $x\in K$ for $K\subseteq X$ compact.
\item The transition functions satisfy a strong Doeblin condition uniformly in~$\varepsilon$. Namely, there exist a probability measure $\eta$ on $X$, a constant $a > 0$, and an integer $m > 0$ such that
\[
p_{\varepsilon}^m(x,A) \geq a \eta(A)~~~{\rm for}~~x \in M,~A \in \mathcal{B}(X),~\varepsilon \in [0, \varepsilon_0].
\]
It then follows that for every $\varepsilon$, there is a unique invariant measure $\lambda^{\varepsilon}(dy)$ on $M$ for $p_{\varepsilon}(x,dy)$, and the associated Markov
chain is uniformly exponentially mixing, i.e., there are $\Lambda>0,c>0$, such that
\[
|p_{\varepsilon}^k(x,A)-\lambda^{\varepsilon}(A)|\leq ce^{-\Lambda k}~~~{\rm for}~{\rm all}~ x \in M,~A \in \mathcal{B}(M),~\varepsilon \in [0, \varepsilon_0].
\]
\item The function $g$ is such that $\int_M g\, d\lambda^{\varepsilon}=0$ for each $\varepsilon \in [0,\varepsilon_0]$.
\end{enumerate}

\begin{lemma} \label{abstractlemma}
Suppose that Assumptions 1--6 above are satisfied and let $Z^{x,\varepsilon}_k$ be the Markov chain on $M$ starting at $x$, with transition function $p_\varepsilon$. Let $\tau = \tau(x,\varepsilon)$ be the first time when the chain reaches the set $C = C_1 \sqcup \ldots \sqcup C_n$. Let $e(Z^{x,\varepsilon}_k) = i$ if $Z^{x,\varepsilon}_k \in C_i$.
Then
\begin{equation} \label{tchh}
\Big( \varepsilon^{\frac{1}{4}} (g(Z^{x,\varepsilon}_1) + \ldots +g(Z^{x,\varepsilon}_\tau)), e(Z^{x,\varepsilon}_\tau) \Big) \rightarrow (F_1, F_2)
\end{equation}
in distribution, uniformly in $x \in X$, where $F_1$ takes values in $ \mathbb{R}^2$, $F_2$ takes values in $\{1,\ldots,n\}$, and $F_1$ and $F_2$ are independent. The random variable $F_1$ is distributed as $(\xi/\int_X J d \lambda^0)^{1\over 2} N(0, \bar{Q})$, where $\xi$ is exponential with parameter one and $\bar{Q}$ is the matrix such that
\[
( g(Z^{x, 0}_1) + \ldots + g (Z^{x, 0}_k)) /\sqrt{k} \rightarrow N(0,\bar{Q})\quad \text{in distribution as $k \rightarrow \infty$.}
\]
The random variable $F_2$ satisfies $ \P(F_2 = i) = \int_X h_i\,d\lambda^0/\int_X J \,d \lambda^0$, $i=1,\ldots,n$.
\end{lemma}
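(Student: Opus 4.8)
The plan is to decompose the problem into three pieces: (i) understanding the number of steps $\tau$ before termination, (ii) understanding the accumulated sum $\sum_{k=1}^{\tau} g(Z^{x,\varepsilon}_k)$ conditionally on $\tau$, and (iii) understanding the exit label $e(Z^{x,\varepsilon}_\tau)$. The heuristic is that, because of Assumption 3, each step of the chain terminates into $C$ with probability roughly $\sqrt{\varepsilon}\,J(\text{current state})$, so $\tau$ is of order $\varepsilon^{-1/2}$ and, after rescaling time by $\varepsilon^{1/2}$, the number of steps becomes exponentially distributed. Meanwhile, over $\tau \sim \varepsilon^{-1/2}$ steps the chain (which, by Assumptions 1,2,4,5, behaves more and more like the ergodic, non-terminating chain with transition $p_0$ and invariant measure $\lambda^0$) accumulates a sum of order $\sqrt{\tau} = \varepsilon^{-1/4}$ of the mean-zero (Assumption 6) function $g$, so $\varepsilon^{1/4}\sum g(Z_k^{x,\varepsilon})$ converges to a Gaussian with a random variance proportional to the (exponential) limit of $\varepsilon^{1/2}\tau$. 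The covariance is that of the CLT for $p_0$, i.e.\ $\bar Q$, and the label is decided ``at the last step'' with probabilities proportional to $h_i$ averaged against $\lambda^0$.

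I would carry this out as follows. \textbf{Step 1 (mixing and coupling to $p_0$).} Using the strong Doeblin condition (Assumption 5) and Assumption 4, show that up to times of order $\varepsilon^{-1/2}$ the law of the path $(Z^{x,\varepsilon}_k)$ is close, in a suitable sense (e.g.\ after coupling, or via a Lindeberg/telescoping argument on test functions), to that of the $p_0$-chain started from $x$; in particular the empirical occupation measure of $Z^{x,\varepsilon}$ over $\lfloor t\varepsilon^{-1/2}\rfloor$ steps converges to $t\,\lambda^0$, uniformly in $x$. \textbf{Step 2 (exponential limit for $\tau$).} Write the survival probability as a product, $\P(\tau > k) = \E\big[\prod_{j=0}^{k-1}(1 - p_\varepsilon(Z^{x,\varepsilon}_j, C))\big]$; since $p_\varepsilon(y,C) = \sqrt{\varepsilon}\,J(y) + o(\sqrt\varepsilon)$ uniformly (Assumption 3) and the bound $k$ in Assumption 3 gives uniform control of the product, take logs, use $\log(1-s) = -s + O(s^2)$, and invoke Step 1 to get $\sum_{j=0}^{\lfloor t\varepsilon^{-1/2}\rfloor - 1}\sqrt\varepsilon\,J(Z^{x,\varepsilon}_j) \to t\int_X J\,d\lambda^0$. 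Hence $\varepsilon^{1/2}\tau \to \xi/\int_X J\,d\lambda^0$ with $\xi$ standard exponential, uniformly in $x$. \textbf{Step 3 (CLT with random time).} For the $p_0$-chain the additive functional $S_k = \sum_{j=1}^k g(Z^{x,0}_j)$ satisfies $S_k/\sqrt k \Rightarrow N(0,\bar Q)$ (given; this is where Assumption 6 and the mixing make $\bar Q$ well-defined as the limiting covariance). Upgrade this to a functional CLT $S_{\lfloor t\varepsilon^{-1/2}\rfloor}\varepsilon^{1/4} \Rightarrow \sqrt{t}\,N(0,\bar Q)$ (Donsker-type, using mixing), transfer it to the $Z^{x,\varepsilon}$-chain via Step 1, and then evaluate at the random time $t = \varepsilon^{1/2}\tau$; the asymptotic independence of the stopped Brownian path increment from the stopping time (the latter being a functional of the independent ``termination coins'') yields $F_1 \stackrel{d}{=} (\xi/\int_X J\,d\lambda^0)^{1/2} N(0,\bar Q)$, independent of everything determined by the label. \textbf{Step 4 (exit label).} Condition on the path and on $\{\tau = k\}$; the probability that termination at step $k$ lands in $C_i$ is $p_\varepsilon(Z^{x,\varepsilon}_{k-1},C_i)/p_\varepsilon(Z^{x,\varepsilon}_{k-1},C) \to h_i(Z^{x,\varepsilon}_{k-1})/J(Z^{x,\varepsilon}_{k-1})$, and since $Z^{x,\varepsilon}_{k-1}$ for $k \sim \varepsilon^{-1/2}\tau$ is, by ergodicity/Step 1, asymptotically $\lambda^0$-distributed, averaging gives $\P(F_2 = i) = \int_X h_i\,d\lambda^0 / \int_X J\,d\lambda^0$; the same averaging shows $F_2$ is independent of $F_1$ (the location at the terminal step decouples from the accumulated Gaussian, which depends on the whole trajectory only through its $\lambda^0$-statistics).

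\textbf{The main obstacle} I expect is Step 3: making rigorous the joint convergence of the rescaled additive functional \emph{and} the stopping time $\tau$, with the claimed independence in the limit, uniformly in the starting point $x$. One has to be careful that $\tau$ is not independent of the path $(Z^{x,\varepsilon}_k)$ itself (it is a hitting time of $C$, hence path-dependent), so the clean statement ``$F_1$ and the pair $(\varepsilon^{1/2}\tau, F_2)$ are asymptotically independent'' requires an argument: the natural route is to enlarge the probability space with the i.i.d.\ $[0,1]$-uniforms that drive the termination decisions, represent $\tau$ as the first step $k$ at which the uniform falls below $p_\varepsilon(Z_{k-1},C)$, and show that conditionally on $\tau$ being large ($\sim \varepsilon^{-1/2}$) the law of $\varepsilon^{1/4}S_\tau$ is governed by the CLT evaluated at the essentially deterministic-given-$\tau$ occupation statistics, which are the same for all large $\tau$. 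Combined with a uniform-in-$x$ tightness estimate (from Assumption 5, the chain forgets $x$ after $m$ steps) and the Cramér–Wold device to pass from one-dimensional to $\mathbb{R}^2$-valued convergence, this should close the argument; the uniformity in $x$ throughout is handled by the uniform mixing and uniform convergence hypotheses in Assumptions 3–5.
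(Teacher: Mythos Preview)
Your overall strategy is sound and lands on the right answer, but it differs from the paper in one essential technical device, and that device is exactly what resolves the obstacle you flag in Step~3.

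The paper does \emph{not} work directly with the chain $Z^{x,\varepsilon}$ and its path-dependent hitting time $\tau$. Instead it introduces the conditioned chain $\tilde Z^{x,\varepsilon}$ with transition $\tilde p_\varepsilon(x,dy)=p_\varepsilon(x,dy)/p_\varepsilon(x,X)$ (so $\tilde Z$ never leaves $X$), and a new measure $\nu_\varepsilon$ on paths under which the killing occurs at an \emph{externally imposed} exponential clock: at step $k$ the chain is killed into state $i$ with probability $(1-e^{-\sqrt\varepsilon J(x_k)})h_i(x_k)/J(x_k)$, independently of everything else. A short Radon--Nikodym computation (the paper's Lemma~2.5) shows that the original law $\mu_\varepsilon$ and this decoupled law $\nu_\varepsilon$ agree up to a factor in $(1-\delta,1+\delta)$ on a set of $\nu_\varepsilon$-probability $\geq 1-\delta$. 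Once you are under $\nu_\varepsilon$, the stopping time is literally driven by an independent exponential $\xi$, and the joint characteristic function factorises cleanly; the remainder of the argument is a direct computation with the characteristic function, using uniform mixing to decorrelate $h_i(\tilde Z_k)$ from the partial sum $\sum_{j\leq k-k_0} g(\tilde Z_j)$ and the CLT for $\tilde Z$.

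Your proposal to ``enlarge the probability space with i.i.d.\ uniforms driving the termination decisions'' is morally the same idea, but stated at the level of a coupling rather than a change of measure, and you do not say how to extract the asymptotic independence from it. The paper's formulation is cleaner because the Radon--Nikodym derivative is explicit and the bound on it is a two-line estimate. Your functional-CLT-plus-random-time-change route (Step~3) would require a continuous-mapping argument for the hitting time functional and a separate proof that the fluctuations of $\varepsilon^{1/4}S_{\lfloor \cdot\,\varepsilon^{-1/2}\rfloor}$ and the rescaled clock $\varepsilon^{1/2}\tau$ become independent; this can be done, but it is more work than the change of measure.

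One further imprecision: in Step~4 you write that $Z^{x,\varepsilon}_{k-1}$, for $k$ equal to the termination time, is asymptotically $\lambda^0$-distributed. Conditionally on $\{\tau=k\}$ this is \emph{not} true: the terminal state is size-biased by $J$, with asymptotic density $J\,d\lambda^0/\int J\,d\lambda^0$. The biasing cancels against the ratio $h_i/J$ and you recover the correct answer $\int h_i\,d\lambda^0/\int J\,d\lambda^0$, but the sentence as written is wrong. The paper avoids this by computing $\E[h_i(\tilde Z_k)]$ for \emph{unconditioned} large $k$ (which is $\int h_i\,d\lambda^0$ by ergodicity) and summing over $k$ weighted by the exponential killing density, which is where the $\int J\,d\lambda^0$ in the denominator comes from.
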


Before we proceed with the proof of Lemma~\ref{abstractlemma}, let us show that it does
indeed implies Theorem~\ref{CLT_for_first_hit}.

\begin{proof}[Proof of Theorem~\ref{CLT_for_first_hit}]
Let $\mathcal{L}_0=\mathcal{L} \backslash\{A \in \mathbb{R}^2: \pi(A) \in \{A_i, i=1,\ldots,n\}\}$.
Define $\bar{M} = \mathcal{L}_0 \sqcup \partial V^\delta$. Let us define a family of transition
functions $\bar{p}_\varepsilon (x, dy)$ on $\bar{M}$. For $x \in \mathcal{L}_0$, we
define $\bar{p}_\varepsilon (x, dy)$ as the
distribution of $X_{\tau}^{x,\varepsilon}$ with $\tau = \mu_1^{x,\delta,\varepsilon} \wedge \beta_1^{x,\delta,\varepsilon}$. In other words, it is the measure
induced by the process stopped when it either reaches the boundary of $V^\delta$ or reaches the separatrix after passing by a saddle point. For $x \in \partial V^\delta$, let $\bar{p}_\varepsilon (x, dy)$ coincide with the
distribution of $X_{\bar \tau}^{x,\varepsilon}$ with $\bar \tau = \beta_0^{x,\delta,\varepsilon}$, i.e., the measure
induced by the process stopped when it reaches the separatrix. Since almost every trajectory of $X_t^{x,\varepsilon}$ that starts outside of the set of saddle points does not contain saddle points, $\bar{p}_\varepsilon$ is indeed a stochastic transition function. Let $\bar{Z}^{x,\varepsilon}_k$ be the corresponding
Markov chain starting at $x \in \bar{M}$.

While we introduced $\bar{M}$ as a subset of $ \mathbb{R}^2$, it is going to be more convenient to keep track of $\pi(\bar{Z}^{x,\varepsilon}_k)$ and the latest displacement separately.
%For this purpose we define $M = \pi( \bar{M}) \times \mathbb{Z}^2$.
Let $\varphi: \bar{M} \rightarrow M :=  \pi( \bar{M}) \times \mathbb{Z}^2$ map
$x \in \bar{M}$ into $(\pi(x), ([x_1], [x_2]))$ ($[x_1]$ and $[x_2]$ are the integer parts of the first and second coordinates of $x$).
Define the Markov chain $Z^{x,\varepsilon}_k$ on $M$ via
\[
Z^{\pi(x),\varepsilon}_0 = (\pi(x), 0),~~~
Z^{\pi(x),\varepsilon}_k =
(\varphi_1(\bar{Z}^{x,\varepsilon}_k), \varphi_2( \bar{Z}^{x,\varepsilon}_k) - \varphi_2 ( \bar{Z}^{x,\varepsilon}_{k-1})),~~k \geq 1.
\]
Let $X = \pi (\mathcal{L}_0) \times \mathbb{Z}^2 =   (\mathcal{L}_{ \mathcal{T}} \backslash\{A_1,\ldots,A_n\}) \times \mathbb{Z}^2$ and $C_i =
(\pi(V^\delta) \cap U_i) \times \mathbb{Z}^2$. Thus $M = X \sqcup C_1\sqcup\ldots\sqcup C_n$ as required. The transition functions $p_\varepsilon(x, dy)$ are defined as the transition functions for the Markov chain $Z^{x,\varepsilon}_k$.

For $x = (q,\xi) \in M$, define $g((q,\xi)) = \xi \in \mathbb{Z}^2$, which corresponds to the integer part of the displacement during the last
step if the chain is viewed as a process on $ \mathbb{R}^2$.
From the definition of the stopping times $\beta_k^{x,\delta,\varepsilon}$, it follows that
$\varphi_2( \bar{Z}^{x,\varepsilon}_k) - \varphi_2 ( \bar{Z}^{x,\varepsilon}_{k-1})$ can only take a finite number of values (roughly speaking, the
process $X^{x,\varepsilon}_t$ makes transitions from one periodicity cell to a neighboring one or to itself between the times $\beta_k^{x,\delta,\varepsilon}$ and $\beta_{k+1}^{x,\delta,\varepsilon}$). Therefore, $g(Z^{\pi(x),\varepsilon}_k)$ is bounded almost surely, uniformly in $x$ and $k$. Also, it is continuous in the product topology of $\pi( \bar{M}) \times \mathbb{Z}^2$.
%From the definition of the stopping times $\mu_k^{x,\delta,\varepsilon}$ and $\beta_k^{x,\delta,\varepsilon}$, it follows that the set of values that %$g$ takes is finite, {\bf and therefore $g$ is bounded}.

The paper \cite{Koralov2004} contains some detailed results on the behavior of the process $X^{x,\varepsilon}_t$ near the separatrix. The main idea behind
those results is that the process can be considered in $(H,\theta)$ coordinates in the vicinity of $ \mathcal{L}$. In those coordinates, after an appropriate re-scaling, the limiting process (as $\varepsilon \rightarrow 0$) is easily identified.

In particular, it was shown in \cite{Koralov2004} (Lemma 2.1 and Section 3) that there is a limiting stochastic transition function $p_0(x, dy)$, and properties (1), (2), and (4)-(6) hold. Property (3) follows from Lemma 4.1 of \cite{Koralov2004}. In fact, Lemma 4.1 of \cite{Koralov2004} implies uniform convergence in property (3) on every compact subset of $X$, but it is easy to see that the compactness assumption in the lemma is not needed.
The functions $h_i(x) = h_i^\delta(x)$ depend on $\delta$ and can be identified as
\[
h_i^\delta(x) =  \lim_{\varepsilon \rightarrow 0} \varepsilon^{-1/2} \P\Big({\rm the}~{\rm process}~{\rm starting}~{\rm at}~X^{x,\varepsilon}_{\alpha_1^{x,\delta,\varepsilon}}~{\rm reaches}~\partial V^\delta \cap U_i~{\rm before}~{\rm reaching}~\cL\Big).
\]
From the arguments in Section 4 of \cite{Koralov2004} it follows that
\[
\int_X h^\delta_i(x) \,d \lambda^0(x) = \delta^{-1}(\bar{p}_i + \bar{b}_i(\delta)),~~i =1,\ldots,n,
\]
where $\bar{p}_i > 0$ and $\bar{b}_i(\delta) \rightarrow 0$ as $\delta \rightarrow 0$. Now Lemma~\ref{abstractlemma} implies that Theorem~\ref{CLT_for_first_hit} holds with
\[
Q = \bar{Q} /(\bar{p}_1 + \ldots + \bar{p}_n)\;,\qquad
p_i = \bar{p}_i/(\bar{p}_1 + \ldots + \bar{p}_n)\;.
\]

Finally, let us show that $\bar{Q}$ is non-degenerate. Assuming by contradiction that this is not the case, there is a unit vector $e \in \mathbb{R}^2$ such that the
function $\bar{g} = (e, g): X \rightarrow \mathbb{R}$ has the property that
\[
\big( \bar{g}(Z^{x, 0}_1) + \ldots + \bar{g} (Z^{x, 0}_k)\big) /\sqrt{k} \rightarrow 0\;,
\]
in distribution as $k \rightarrow \infty$.
It follows from $\int_X\bar{g}\,d\lambda_0=0$ and the arguments in \cite[Thm~11]{bolthausen08}
that this is only possible if there is a function $G \in L^2(X, \lambda^0)$  such that
\[
\bar{g}(x) = G(x) - G(Z^{x,0}_1)\;,
\]
almost surely for $\lambda^0$-almost all $x$.
Recall that $x \in X$ can be written as $x = (q,\xi)$, where
$q \in \pi (\mathcal{L}_0)$ and $ \xi \in \mathbb{Z}^2$. Since $Z^{x,0}_1$ does
not depend on $\xi$, while $\bar{g}(x) = (e, \xi)$, we can write $G(x) = \tilde{G}(q) + (e, \xi)$ for some function $\tilde{G}$. Thus
\begin{equation}\label{e:tildeG}
\tilde{G}(q) = \tilde{G}\big((Z^{x,0}_1)^1\big) + \big(e,(Z^{x,0}_1)^2\big)\;,
\end{equation}
where $(Z^{x,0}_1)^1 \in \pi (\mathcal{L}_0)$ and $ (Z^{x,0}_1)^2 \in \mathbb{Z}^2$.
Thus for $\lambda^0$-almost all $x$, we have $ \tilde{G}(q) =\tilde{G}\big((Z^{x,0}_1)^1\big)$
almost surely on the event $ (e,(Z^{x,0}_1)^2) = 0$. From an 
explicit expression for $p_0(x,dy)$ found in \cite{Koralov2004}, it easily follows
that the distribution of $Z^{x,0}_1$ is
absolutely continuous with respect to $\lambda^0$ for each $x$. Therefore, by the Markov property,
$ \tilde{G}(q) =\tilde{G}\big((Z^{x,0}_k)^1\big)$ almost surely on the event
$ (Z^{x,0}_1)^2 = \ldots = (Z^{x,0}_k)^2 = 0$, for $\lambda^0$-almost all $x$. For sufficiently large $k$, the
(sub-probability) distribution of $(Z^{x,0}_k)^1$ restricted to this event has
a positive density with respect to the
projection of $\lambda^0$ onto $\pi (\mathcal{L}_0)$. (The latter statement is a consequence of the geometry of the flow. Roughly speaking, given two points on the separatrix that belong to the same cell of periodicity, the process $\bar{Z}^{x,0}_k$ can go with positive probability from the first point to an arbitrary neighborhood of the second point without leaving the cell of periodicity.) Therefore, $\tilde{G}$ is $\lambda_0$-almost everywhere constant. By \eqref{e:tildeG}, this implies that
$ (e, (Z^{x,0}_1)^2) = 0$ for $\lambda_0$-almost all $x$. Again by the Markov property, $ (e, (Z^{x,0}_k)^2) = 0$ for $\lambda_0$-almost all $x$ for each $k$. Observe, however, that the process $\bar{Z}^{x,0}_k$ starting at an arbitrary point $x$ on the separatrix, has a positive probability of going to any other cell of periodicity if $k$ is sufficiently large. This yields a contradiction, and thus $\bar{Q}$ is non-degenerate.
\end{proof}

Now let us turn to the proof of Lemma~\ref{abstractlemma}. Let
\[
\Omega = \{ \omega = (x, x_1,\ldots,x_k; i): k \geq 0,~x,x_1,\ldots,x_k \in X, i \in \{1,\ldots,n\} \}
\]
be the space of sequences that start at $x \in X$ and end when the sequence enters $C = C_1 \sqcup \ldots \sqcup C_n$, at which point only the index of the set that the sequence enters is taken into account.
The Markov chain $Z^{x,\varepsilon}_k$ together with the stopping time $\tau$ determine a probability measure $\mu_\varepsilon$ on $\Omega$, namely,
\[
\mu_\varepsilon(x, A_1,\ldots,A_k; i) = \int_{A_1}\ldots\int_{A_k}p_\varepsilon(x, dx_1) p_\varepsilon(x_1, dx_2)\cdots p_\varepsilon(x_{k-1}, dx_k) p_\varepsilon(x_k, C_i),
\]
where $A_1,\ldots,A_k \in \mathcal{B}(X)$.
We introduce another probability measure on $\Omega$ via
\begin{align*}
\nu_\varepsilon&(x, A_1,\ldots,A_k, i) =\\
&= \int_{A_1}\cdots\int_{A_k} e^{-\sqrt{\varepsilon}( J(x)+\ldots+J(x_{k-1})) }\frac{ p_\varepsilon(x, dx_1)}{p_\varepsilon(x, X)} \cdots\frac{p_\varepsilon(x_{k-1}, dx_k)}{p_\varepsilon(x_{k-1}, X)}
\frac{ (1-e^{-\sqrt{\varepsilon} J(x_k)}) h_i(x_k)}{J(x_k)}.
\end{align*}
More precisely, we consider a Markov chain $\tilde{Z}^{x,\varepsilon}_k$ on the state space $X$ with transition function $\tilde{p}_\varepsilon(x, dy) = {p}_\varepsilon(x, dy) /{p}_\varepsilon(x, X)$. We can adjoin the states $\{1,\ldots,n\}$ to the space $X$ and assume that at each step the process may get killed by entering a terminal state $i$ with probability $(1-e^{-\sqrt{\varepsilon} H(x_k)})\frac{h_i(x_k)}{J(x_k)}$, $i =1,\ldots,n$.  Let $\sigma$ be the number of steps after which the process is killed.  Then $\nu_\varepsilon(x, A_1,\ldots,A_k, i)$ is the probability that the chain starting at $x$ visits the sets $A_1,\ldots,A_k$ and then enters the terminal state $i$.

\begin{lemma}\label{lem:big_set}
For every $\delta > 0$ there is $\varepsilon' > 0$ such that for $\varepsilon \leq \varepsilon'$ there is a
 a set $\Omega_\varepsilon$ with $\nu_\varepsilon(\Omega_\varepsilon) \geq 1 - \delta$ such that $d \mu_\varepsilon /d \nu_\varepsilon \in (1-\delta, 1+ \delta)$   on  $\Omega_\varepsilon$.
\end{lemma}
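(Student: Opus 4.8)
The plan is to write the Radon--Nikodym derivative $d\mu_\varepsilon/d\nu_\varepsilon$ explicitly and to show it is close to $1$ off a set of small $\nu_\varepsilon$-measure. Comparing the defining formulas for $\mu_\varepsilon$ and $\nu_\varepsilon$ on the cylinder of paths $\omega=(x,x_1,\dots,x_k;i)$ of a fixed length $k$ (with $x_0:=x$), the shared transition kernels cancel and one obtains
\[
\frac{d\mu_\varepsilon}{d\nu_\varepsilon}(\omega)=\Big(\prod_{j=0}^{k-1}p_\varepsilon(x_j,X)\,e^{\sqrt{\varepsilon}\,J(x_j)}\Big)\cdot\frac{p_\varepsilon(x_k,C_i)\,J(x_k)}{\big(1-e^{-\sqrt{\varepsilon}\,J(x_k)}\big)\,h_i(x_k)}\;,
\]
which is positive and finite wherever $h_i(x_k)>0$. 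By Assumption~3, uniformly in $x\in X$ we have $p_\varepsilon(x,C_i)=\sqrt{\varepsilon}(h_i(x)+\rho_i^\varepsilon(x))$, hence $p_\varepsilon(x,X)=1-\sqrt{\varepsilon}(J(x)+\rho^\varepsilon(x))$, with $\|\rho_i^\varepsilon\|_\infty,\|\rho^\varepsilon\|_\infty\to0$ as $\varepsilon\to0$; recall $J=h_1+\dots+h_n$ is bounded since the $h_i$ are.

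Estimate the two groups of factors separately. Since $J$ is bounded, a Taylor expansion gives $p_\varepsilon(x_j,X)e^{\sqrt{\varepsilon}J(x_j)}=1+\sqrt{\varepsilon}\,\vartheta^\varepsilon(x_j)$ with $\|\vartheta^\varepsilon\|_\infty\to0$, so on paths with $k\le K/\sqrt{\varepsilon}$ the product over $j$ lies in $[e^{-2K\|\vartheta^\varepsilon\|_\infty},e^{2K\|\vartheta^\varepsilon\|_\infty}]$, which tends to $1$ for each fixed $K$. The final factor equals $\big(1+\rho_i^\varepsilon(x_k)/h_i(x_k)\big)\cdot\frac{\sqrt{\varepsilon}J(x_k)}{1-e^{-\sqrt{\varepsilon}J(x_k)}}$; the second fraction has the form $u/(1-e^{-u})$ with $u=\sqrt{\varepsilon}J(x_k)\in[0,\sqrt{\varepsilon}\|J\|_\infty]\to0$, hence tends to $1$ uniformly, and the first is within $\|\rho_i^\varepsilon\|_\infty/\theta$ of $1$ whenever $h_i(x_k)\ge\theta$. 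This leads us to set
\[
\Omega_\varepsilon=\big\{\omega=(x,x_1,\dots,x_k;i):\ k\le K/\sqrt{\varepsilon}\ \text{and}\ h_i(x_k)\ge\theta\big\}
\]
with $K$ large and $\theta$ small, to be fixed below: for any such $K,\theta$ there is $\varepsilon'>0$ with $d\mu_\varepsilon/d\nu_\varepsilon\in(1-\delta,1+\delta)$ on $\Omega_\varepsilon$ for $\varepsilon\le\varepsilon'$, uniformly in the starting point.

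It remains to pick $K,\theta$ so that $\nu_\varepsilon(\Omega_\varepsilon)\ge1-\delta$ uniformly in $x$. Under $\nu_\varepsilon$ the path follows $\tilde p_\varepsilon(x,dy)=p_\varepsilon(x,dy)/p_\varepsilon(x,X)$ and is killed at step $k$ with probability $1-e^{-\sqrt{\varepsilon}J(\tilde Z_k)}\le\sqrt{\varepsilon}J(\tilde Z_k)$, so writing $\sigma$ for the number of steps before killing, $\P_{\nu_\varepsilon}(\sigma\ge k)=\E[\exp(-\sqrt{\varepsilon}\sum_{j=0}^{k-1}J(\tilde Z_j))]$. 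The crux is a uniform exponential tail $\P_{\nu_\varepsilon}(\sigma\ge k)\le Ce^{-c\sqrt{\varepsilon}k}$. To obtain it, use $J>0$ on $X$ to choose $\theta_0>0$ with $\eta(\{J\ge\theta_0\})\ge\tfrac12$ for the Doeblin measure $\eta$; since $\tilde p_\varepsilon(x,A)\ge p_\varepsilon(x,A)$ for $A\subseteq X$, $x\in X$, and a $p_\varepsilon$-path started in $X$ reaches $C$ within $m$ steps with probability $O(m\sqrt{\varepsilon})$, one gets $\tilde p_\varepsilon^{\,m}(x,\{J\ge\theta_0\})\ge a\eta(\{J\ge\theta_0\})-O(m\sqrt{\varepsilon})\ge a/4$ for all $x\in X$ and small $\varepsilon$. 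Hence at times $m,2m,\dots$ the chain $\tilde Z$ lies in $\{J\ge\theta_0\}$ with conditional probability $\ge a/4$, so $\sum_{j=0}^{k-1}J(\tilde Z_j)$ stochastically dominates $\theta_0\,\mathrm{Bin}(\lfloor(k-1)/m\rfloor,a/4)$, and bounding that binomial's moment generating function gives the tail (with $c$ of order $\theta_0 a/m$), and hence also $\E_{\nu_\varepsilon}\sigma\le C'/\sqrt{\varepsilon}$, uniformly. Thus $\nu_\varepsilon(\sigma>K/\sqrt{\varepsilon})\le Ce^{-cK}<\delta/2$ for $K$ large. Finally, on $\{\sigma=k\}$ the conditional probability that the terminal pair has $h_i(\tilde Z_k)<\theta$ is at most $\sum_{i:h_i(\tilde Z_k)<\theta}(1-e^{-\sqrt{\varepsilon}J(\tilde Z_k)})h_i(\tilde Z_k)/J(\tilde Z_k)\le\sqrt{\varepsilon}n\theta$ (using $\sum_{i:h_i<\theta}h_i\le n\theta$ and $1-e^{-u}\le u$), so
\[
\nu_\varepsilon\big(\{\omega:h_i(x_k)<\theta\}\big)\le\sqrt{\varepsilon}\,n\theta\sum_{k\ge0}\P_{\nu_\varepsilon}(\sigma\ge k)=\sqrt{\varepsilon}\,n\theta(1+\E_{\nu_\varepsilon}\sigma)\le n\theta(\sqrt{\varepsilon}+C')<\delta/2
\]
for $\theta$ and $\varepsilon$ small. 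Choosing $K$, then $\theta$, then $\varepsilon'$ accordingly completes the argument.

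The main obstacle is the uniform-in-$\varepsilon$ exponential tail for $\sigma$: the Doeblin/mixing input of Assumption~5 has to be run for the conditioned, sub-stochastically killed chain $\tilde Z$ rather than for $p_\varepsilon$ itself, with constants that stay bounded away from degeneracy as $\varepsilon\to0$; the remaining steps reduce to the elementary expansions above.
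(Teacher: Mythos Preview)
Your proof is correct and follows essentially the same approach as the paper: write out $d\mu_\varepsilon/d\nu_\varepsilon$ explicitly, restrict to paths with $\sigma=O(\varepsilon^{-1/2})$ and $h_i(x_\sigma)$ bounded away from zero, and verify both that this set has $\nu_\varepsilon$-measure close to $1$ and that the ratio is close to $1$ there. The only notable difference is in how the tail $\nu_\varepsilon(\sigma>K/\sqrt{\varepsilon})$ is controlled: the paper appeals to the law of large numbers for $\sum_{j<k}J(\tilde Z_j)$ (available from the uniform mixing in Assumption~5), whereas you transfer the Doeblin minorization from $p_\varepsilon$ to $\tilde p_\varepsilon$ and then stochastically dominate $\sum J(\tilde Z_j)$ from below by $\theta_0$ times a Binomial. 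Your route is slightly more elementary and also dispenses with the lower cutoff $\sigma\ge a/\sqrt{\varepsilon}$ that the paper introduces; that lower bound is indeed not needed for the ratio estimate on $\Omega_\varepsilon$ (the product of the $p_\varepsilon(x_j,X)e^{\sqrt{\varepsilon}J(x_j)}$ is close to $1$ for \emph{all} $k\le K/\sqrt{\varepsilon}$, not just $k\ge a/\sqrt{\varepsilon}$), though the two-sided window $[a/\sqrt{\varepsilon},b/\sqrt{\varepsilon}]$ reappears in the paper's subsequent proof of Lemma~\ref{abstractlemma}.
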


\begin{proof}
To choose the set $\Omega_{\varepsilon}$, note that
\[
\nu^{\varepsilon}(\sigma=k)=\E\left[e^{-\sqrt{\varepsilon}(J(\tilde{Z}^{x,\varepsilon})+\ldots+J(\tilde{Z}^{x,\varepsilon}_{k-1}))}(1-e^{-\sqrt{\varepsilon}J(\tilde{Z}_k^{x,\varepsilon})})\right].
\]
Using the law of large numbers for the Markov chain $\tilde{Z}^{x,\varepsilon}$, which can be applied uniformly in $\varepsilon$ due to the uniform mixing (a consequence of Assumption~5), and the boundedness of $J$ (a consequence of Assumption~3), we conclude that for every $\eta>0$ there is a $k_0$ independent of $\varepsilon$ such that
\[
\P\bigg(\Big|\frac{1}{k}\sum_{j=0}^{k-1}J(\tilde{Z}^{x,\varepsilon}_j)-J_{\varepsilon}\Big|\geq\eta\bigg)\leq \eta
\]
for $k\geq k_0$, where $J_{\varepsilon}=\int_XJ(u)d\lambda^{\varepsilon}(u)$. Therefore
\[
\nu^{\varepsilon}(\sigma< a/\sqrt{\varepsilon})\leq\nu^{\varepsilon}(\sigma<k_0)+\eta+(1-e^{-\sqrt{\varepsilon}\sup_{u\in X}J(u)})\sum_{k=k_0}^{[ a/\sqrt{\varepsilon}]}e^{-\sqrt{\varepsilon}(kJ_{\varepsilon}-k\eta)}.
\]
Since $J_{\varepsilon}\to J_0 > 0$ and since $\eta$ was arbitrary, we have $\nu^{\varepsilon}(\sigma<a/\sqrt{\varepsilon})<\delta/4$ (for all sufficiently small $\varepsilon$) if $a$ is small enough. Similarly one can show that $\nu^{\varepsilon}(\sigma>b/\sqrt{\varepsilon})<\delta/4$ if we choose $b$ to be sufficiently large. We set $\Omega^1_{\varepsilon}=\{\sqrt{\varepsilon}\sigma\in [a,b]\}$. Note that $ \nu_\varepsilon(\Omega^1_\varepsilon) \geq 1 - \delta/2$. Also note that
\[
\nu^{\varepsilon}(\sigma=k, h_i(x_k) < \eta; i) = \E\left[e^{-\sqrt{\varepsilon}\sum_{j=0}^{k-1}J(\tilde{Z}_j^{x,\varepsilon})}(1-e^{-\sqrt{\varepsilon}J(\tilde{Z}_k^{x,\varepsilon})})\chi_{\{h_i(\tilde{Z}_k^{x,\varepsilon})<\eta\}} \frac{h_i(\tilde{Z}_k^{x,\varepsilon})}{J(\tilde{Z}_k^{x,\varepsilon})}\right].
\]
Using the inequality $x^{-1}(1-e^{-cx})<c$ for $x,c>0$, this is less than or equal to $\eta\sqrt{\varepsilon}$. This means that if $\eta > 0$ is choosen small enough, then
\[
\nu^\varepsilon( \sqrt{\varepsilon}\sigma\in [a,b], h_i(x_\sigma) < \eta; i) < \delta/2n~~{\rm for}~{\rm each}~i=1,\ldots,n.
\]

We set $\Omega_\varepsilon^2 = \bigcup_{i=1}^n \{ \sqrt{\varepsilon}\sigma\in [a,b], h_i(x_\sigma) < \eta; i \}$ and $\Omega_\varepsilon = \Omega^1_\varepsilon \setminus \Omega^2_\varepsilon$. Thus $\nu_\varepsilon(\Omega_\varepsilon) > 1 -\delta$.
Observe that
\[
\frac{d\mu_{\varepsilon}}{d\nu_{\varepsilon}}(x,x_1,\ldots,x_k,i)=\frac{p_{\varepsilon}(x,X)\cdots p_{\varepsilon}(x_{k-1},X)}{e^{-\sqrt{\varepsilon}( J(x)+\ldots+J(x_{k-1})) }}\frac{p_{\varepsilon}(x_k,C_i)}{1-e^{-\sqrt{\varepsilon}J(x_k)}}\frac{J(x_k)}{h_i(x_k)}~~~{\rm on}~~\Omega_\varepsilon.
\]
By the definition of $\Omega_\varepsilon$,
it suffices to consider $k(\varepsilon) \in[a/\sqrt{\varepsilon},b/\sqrt{\varepsilon}]$. By the definition of $h_i$ and $J$, the product of the last two fractions converges to $1$ uniformly as $\varepsilon\downarrow 0$ (here we use the definition of $\Omega^2_\varepsilon$). Also note that
\[
\prod_{j=0}^{k(\varepsilon)-1}p_{\varepsilon}(x_j,X)=\prod_{j=0}^{k(\varepsilon)-1}(1-\sqrt{\varepsilon}J(x_j)+o(\sqrt{\varepsilon})) = e^{- \sqrt{\varepsilon} \sum_{j=0}^{k(\varepsilon)-1}J(x_j)+o(1)}
\]
as $\varepsilon\downarrow 0$ provided that $k(\varepsilon) \in[a/\sqrt{\varepsilon},b/\sqrt{\varepsilon}]$,
which implies the desired result.
\end{proof}

\noindent{\it Proof of Lemma~\ref{abstractlemma}.}
Using Lemma \ref{lem:big_set}, we restate Lemma~\ref{abstractlemma} in terms of the Markov chain $\tilde{Z}^{x,\varepsilon}_k$. Note first that $\tilde{Z}^{x,\varepsilon}$ inherits the
strong Doeblin property, which holds uniformly in~$\varepsilon$, i.e.\
\[
\tilde{p}_{\varepsilon}^m(x,A) \geq a \eta(A)~~~{\rm for}~~x \in X,~A \in \mathcal{B}(X),~\varepsilon \in [0, \varepsilon_0].
\]
This implies the exponential mixing, i.e., there are $\Lambda>0,c>0$, such that
\[
|\tilde{p}_{\varepsilon}^k(x,A)-\tilde{\lambda}^{\varepsilon}(A)|\leq ce^{-\Lambda k}~~~{\rm for}~{\rm all}~ x \in X,~A \in \mathcal{B}(X),~\varepsilon \in [0, \varepsilon_0],
\]
where $\tilde{p}_{\varepsilon}$ is the transition function for the chain and $\tilde{\lambda}^{\varepsilon}$ is the invariant measure.

We can also restrict the function $g$ (originally defined on $M$) to the space $X$. We claim that for each $\alpha > 0$ it satisfies
\begin{equation} \label{hll1}
\left|\int_X g d\tilde{\lambda}^{\varepsilon}\right| \leq C \varepsilon^{1/2-\alpha}
\end{equation}
for  some constant $C$ and each $\varepsilon \in [0,\varepsilon_0]$. Indeed, by the exponential mixing,
\[
\left|\int_Xg(y)\tilde{p}_{\varepsilon}^k(x,dy)-\int_Xg(y)\tilde{\lambda}^{\varepsilon}(dy)\right|+
\left|\int_Mg(y)p_{\varepsilon}^k(x,dy)-\int_Mg(y)\lambda^{\varepsilon}(dy)\right|\leq c_1e^{-\Lambda k}
\]
for $x \in X$, $\varepsilon \in (0, \varepsilon_0]$. It is also easy to see by induction that
\begin{equation} \label{g77}
\left|\int_Xg(y)\tilde{p}_{\varepsilon}^k(x,dy)-\int_Mg(y)p_{\varepsilon}^k(x,dy)\right| \leq c_2\sqrt{\varepsilon} k.
\end{equation}
Now we can take  $k=[\varepsilon^{-\alpha}]$ in these two inequalities, proving (\ref{hll1}) since ${\int_Mg(y)\lambda^{\varepsilon}(dy) = 0}$. The same two inequalities with $g$ replaced by an arbitrary bounded continuous function $f$ imply that
\[
\int_Xf(y)\tilde{\lambda}^{\varepsilon}(dy)-\int_Mf(y)\lambda^{\varepsilon}(dy) \rightarrow 0~~{\rm as}~\varepsilon \downarrow 0.
\]
We also know that $\lambda^\varepsilon(M \setminus X) \rightarrow 0$ and $\lambda^{\varepsilon}\Rightarrow\lambda^0$ as $\varepsilon\downarrow 0$, as immediately follows from the properties of $p_\varepsilon$ (the latter statement can be also found in Lemma 2.1 in \cite{Koralov2004}). Therefore,
\[
\int_Xf(y)\tilde{\lambda}^{\varepsilon}(dy)-\int_Xf(y)\lambda^{0}(dy) \rightarrow 0~~{\rm as}~\varepsilon \downarrow 0,
\]
that is $\tilde{\lambda}_{\varepsilon}\Rightarrow\lambda_0$ as $\varepsilon\downarrow 0$.

 Recall that $ \bar{Q}$ is the matrix such that
\[
( g(Z^{x, 0}_1) + \ldots + g (Z^{x, 0}_k)) /\sqrt{k} \rightarrow N(0,\bar{Q})
\]
in distribution as $k \rightarrow \infty$.
Let $ \bar{Q}(\varepsilon)$ be such that
\[
\Big( g(\tilde{Z}^{x, 0}_1) + \ldots + g (\tilde{Z}^{x, 0}_k) - k\int_X g d \tilde{\lambda}_{\varepsilon}\Big) /\sqrt{k} \rightarrow N(0,\bar{Q}(\varepsilon))
\]
in distribution as $k \rightarrow \infty$.
From (\ref{g77}) with $k =1$ and $g$ replaced by an arbitrary bounded continuous function $f$ on $X$ it follows that
$\tilde{p}_{\varepsilon}(x,dy)\stackrel{\varepsilon\to 0}\Rightarrow p_0(x,dy)$ uniformly in $x\in K$ for $K\subseteq X$ compact, since we assumed that the same convergence holds for ${p}_{\varepsilon}(x,dy)$. This and the strong Doeblin property for $\tilde{p}_{\varepsilon}(x,dy)$ easily imply that $\bar{Q}(\varepsilon) \rightarrow ~\bar{Q}$ as $\varepsilon \downarrow 0$ (this was proved in Lemma 2.1 (c) of \cite{Koralov2004} under an additional assumption that $\int_X g d\tilde{\lambda}^{\varepsilon} = 0$, which is now replaced by (\ref{hll1})).

We still have the functions $h_i$ defined on $X$, and we assume that the chain gets killed by entering the state $i \in \{1,\ldots,n\}$ with probability
$ (1-e^{-\sqrt{\varepsilon} J(x)}){h_i(x)}/{J(x)}$. Let $\sigma$ be the time when the chain gets killed. Let the random variable  $\tilde{e}$ be equal to $i$ if the process gets killed by entering the state $i$. Since the function $g$ is bounded, omitting one last term in the sum on the left hand side of (\ref{tchh}) does not affect the limiting distribution. Now we can recast (\ref{tchh}) as follows:
\[
\left( \varepsilon^{\frac{1}{4}} \big(g(\tilde{Z}^{x,\varepsilon}_1) + \ldots +g(\tilde{Z}^{x,\varepsilon}_{\sigma})\big), \tilde{e}\right) \rightarrow (F_1, F_2)
\]
in distribution.  Fix $t \in \mathbb{R}$. Let $\xi$ be an exponential random variable with parameter one on some probability space $(\Omega',P')$, independent of the process. Then for $i \in \{1,\ldots,n\}$, we have  by the definition of $\nu_{\varepsilon}$ and by arguments similar to those in the proof of Lemma~\ref{lem:big_set} that
\begin{align}\label{eq:almost_over1}
\E&\left(e^{i\left<\varepsilon^{1/4}\sum_{j=1}^{\sigma}g(\tilde{Z}_j^{x,\varepsilon}),t\right>} ;\tilde{e} = i\right) =\delta(a,b,\varepsilon)+\\
\nonumber &+\sum_{k=[ a/\sqrt{\varepsilon}]}^{[ b/\sqrt{\varepsilon}]} \E\bigg( \frac{h_i(\tilde{Z}^{x,\varepsilon}_k)}{J(\tilde{Z}^{x,\varepsilon}_k)}
e^{i\left<\varepsilon^{1/4}\sum_{j=1}^{k}g(\tilde{Z}_j^{x,\varepsilon}),t\right>}\P'\bigg(
\sqrt{\varepsilon}\sum_{j =0}^{k-1} J(\tilde{Z}^{x,\varepsilon}_j)  < \xi \leq \sqrt{\varepsilon}\sum_{j = 0}^{k} J(\tilde{Z}^{x,\varepsilon}_j)\bigg) \bigg),
\end{align}
where $\delta(a,b,\varepsilon)\to 0$ as $a \to 0,b\to \infty$ uniformly in $\varepsilon$.

Note that by the law of large numbers,
\[
\E\sum_{k=[ a/\sqrt{\varepsilon}]}^{[ b/\sqrt{\varepsilon}]}\bigg|\P'\bigg(\sum_{j=0}^{k-1}J(\tilde{Z}_j^{x,\varepsilon})<
\frac{\xi}{\sqrt{\varepsilon}}<\sum_{j=0}^{k}J(\tilde{Z}_j^{x,\varepsilon})\bigg)-\sqrt{\varepsilon} e^{-k \tilde{J}_\varepsilon \sqrt{\varepsilon}} J (\tilde{Z}^{x,\varepsilon}_k)\bigg|\to 0
\]
as $\varepsilon\to 0$ uniformly in $0<a<b$, where  $\tilde{J}_{\varepsilon}=\int_XJ(u)d\tilde{\lambda}^{\varepsilon}(u)$. Therefore the main (i.e., second) term on the right hand side of (\ref{eq:almost_over1}) can be replaced by
\begin{equation}\label{eq:almost_over25}
\sqrt{\varepsilon} \sum_{k=[ a/\sqrt{\varepsilon}]}^{[ b/\sqrt{\varepsilon}]} \E\left( {h_i(\tilde{Z}^{x,\varepsilon}_k)}
e^{i\left<\varepsilon^{1/4}\sum_{j=1}^{k}g(\tilde{Z}_j^{x,\varepsilon}),t\right>} \right)e^{-k \tilde{J}_\varepsilon \sqrt{\varepsilon}}
\end{equation}

Uniform exponential mixing also tells us that there is a constant $C$ such that for every $0<k_0<k$ we have
\begin{equation}\label{eq:almost_over3}
\left|\E\left(h_i(\tilde{Z}^{x,\varepsilon}_k)e^{\left<\varepsilon^{1/4}\sum_{j=1}^{k-k_0}g(\tilde{Z}_j^{x,\varepsilon}),t\right>}\right)-\E\left(h_i(\tilde{Z}^{x,\varepsilon}_k)\right)\E\left(e^{\left<\varepsilon^{1/4}\sum_{j=1}^{k-k_0}g(\tilde{Z}_j^{x,\varepsilon}),t\right>}\right)\right|<ce^{-\Lambda k_0}\;.
\end{equation}
It is easy to see that fixing $k_0>0$, i.e., dropping finitely many terms from the sum in the exponent in \eqref{eq:almost_over25} does not change the limit (it only introduces an overall error term of order $\varepsilon^{1/4}$).

From the ergodic theorem, which holds uniformly in $\varepsilon$ by uniform exponential mixing, and the fact that $\tilde{\lambda}^{\varepsilon}\Rightarrow \lambda^0$, it follows that
\begin{equation}\label{eq:almost_over4}
\sup_{k \in [[a/\sqrt{\varepsilon}],[b/\sqrt{\varepsilon}]]} \left| \E\left(h_i(\tilde{Z}^{x,\varepsilon}_k)\right) - \int_Xh_i(u)d\lambda^{0}(u)\right| \rightarrow 0\;,
\end{equation}
as $\varepsilon\downarrow 0$.
Choosing $\alpha<1/4$, it follows from (\ref{hll1}) that
\[
\sup_{k \in [[a/\sqrt{\varepsilon}],[b/\sqrt{\varepsilon}]]} \left|\E\left(e^{i\left<\varepsilon^{1/4}\sum_{j=1}^{k-k_0}g(\tilde{Z}_j^{x,\varepsilon}),t\right>}\right) -
\E\left(e^{i\left<\varepsilon^{1/4}\sum_{j=1}^{k-k_0}\left(g(\tilde{Z}_j^{x,\varepsilon})-
\int_Xgd\tilde{\lambda}^{\varepsilon}\right),t\right>}\right)\right| \rightarrow 0\;,
\]
as $\varepsilon\downarrow 0$.
On the other hand,  we have the following version of the central limit theorem:
\[
\sup_{k \in [[a/\sqrt{\varepsilon}],[b/\sqrt{\varepsilon}]]} \left|
\E\left(e^{i\left<\varepsilon^{1/4}\sum_{j=1}^{k-k_0}\left(g(\tilde{Z}_j^{x,\varepsilon})-
\int_Xgd\tilde{\lambda}^{\varepsilon}\right),t\right>}\right) -\E e^{i\left<\sqrt{k} \varepsilon^{1/4} \cdot N(0,\bar{Q}),t\right>}\right| \rightarrow 0\;,
\]
as $\varepsilon\downarrow 0$,
which holds thanks to the uniform strong Doeblin property and the fact that $\bar{Q}(\varepsilon) \rightarrow \bar{Q}$ as $\varepsilon \downarrow 0$.

Combining this with (\ref{eq:almost_over1}), (\ref{eq:almost_over25}), (\ref{eq:almost_over3}), and (\ref{eq:almost_over4}), and using the fact that $\tilde{J}_{\varepsilon}\to J_0$, we obtain that
\[
\limsup_{\varepsilon\downarrow 0}\left|\E\left(e^{i\left<\varepsilon^{1/4}\sum_{j=1}^{k}g(\tilde{Z}_j^{x,\varepsilon}),t\right>} ;\tilde{e} = i\right)-\frac{\int_Xh_id\lambda^0}{\int_XJd\lambda_0}\int_0^{\infty}\E e^{i\sqrt{s}\langle N(0,\bar{Q}),t\rangle}J_0e^{-sJ_0}ds\right|\leq ce^{-\Lambda k_0}.
\]
Since $t$ and $k_0$ were arbitrary, this implies the desired result.
\qed
\\

We close this section by stating a technical lemma that gives us control over how far away the process wanders during an upcrossing.
Its proof relies on the same arguments as the proof of Lemma~\ref{abstractlemma} considering the maximum of $\sum_{j=1}^{k}g(\tilde{Z}_j^{x,\varepsilon})$ until $\sigma$ and using the invariance principle for Markov chains.

\begin{lemma} \label{displacement}
For each $\eta > 0$ there is $\delta_0 > 0$ such that
\[
\lim_{\varepsilon \downarrow 0} \sup_{x \in \mathbb{R}^2} \P \Big(\varepsilon^{1/4} \sup_{0 \leq t \leq \sigma^{x, \delta, \varepsilon}_1 } |X_{t}^{x,\varepsilon}-x| > \eta\Big) < \eta
\]
whenever $0 < \delta \leq \delta_0$.
\end{lemma}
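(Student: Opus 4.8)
The plan is to reduce the statement to a maximal inequality for the Markov chain $Z^{x,\varepsilon}_k$ constructed in the proof of Theorem~\ref{CLT_for_first_hit}, and then to apply the invariance principle together with the stopping-time estimates established in the proof of Lemma~\ref{abstractlemma}. First I would note that on the time interval $[0,\sigma^{x,\delta,\varepsilon}_1]$ the process $X^{x,\varepsilon}_t$ alternates between sojourns in the cell interiors (where $H$ is bounded away from $0$) and excursions near $\mathcal{L}$. During an interior sojourn $|X^{x,\varepsilon}_t|$ cannot change by more than $O(1)$ deterministically, so the total displacement up to $\sigma^{x,\delta,\varepsilon}_1$ is, up to an additive $O(1)$, controlled by the partial sums $\sum_{j=1}^{k} g(Z^{x,\varepsilon}_j)$ with $k$ ranging up to the number $\tau$ of steps needed to reach $C=C_1\sqcup\cdots\sqcup C_n$ — here we use, exactly as in the proof of Theorem~\ref{CLT_for_first_hit}, that the increments of $\varphi_2(\bar Z^{x,\varepsilon}_\cdot)$ are uniformly bounded, so that $|g|\le K$ almost surely. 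Thus it suffices to bound
\[
\P\Big(\varepsilon^{1/4}\max_{0\le k\le \tau}\Big|\sum_{j=1}^{k} g(Z^{x,\varepsilon}_j)\Big| > \eta/2\Big).
\]

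Next I would split according to the value of $\tau$. By the arguments in the proof of Lemma~\ref{lem:big_set} (which carry over verbatim to $\tau$ via Lemma~\ref{lem:big_set} itself, identifying $\tau$ with the killing time $\sigma$ of the chain $\tilde Z^{x,\varepsilon}$ up to the measure change $d\mu_\varepsilon/d\nu_\varepsilon$), for every $\rho>0$ one can choose $b=b(\rho)$ large, independent of $\varepsilon$ and $x$, with $\P(\tau > b/\sqrt\varepsilon) < \rho$. On the complementary event it suffices to control $\varepsilon^{1/4}\max_{0\le k\le b/\sqrt\varepsilon}|\sum_{j=1}^k g(\tilde Z^{x,\varepsilon}_j)|$. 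Writing $g = \bar g + \int_X g\,d\tilde\lambda^\varepsilon$ with $|\int_X g\,d\tilde\lambda^\varepsilon|\le C\varepsilon^{1/2-\alpha}$ by \eqref{hll1}, the mean part contributes at most $\varepsilon^{1/4}\cdot(b/\sqrt\varepsilon)\cdot C\varepsilon^{1/2-\alpha}=Cb\,\varepsilon^{1/4-\alpha}\to 0$ for $\alpha<1/4$, so only the centered sum matters. For that one uses the functional CLT / maximal invariance principle for uniformly exponentially mixing Markov chains: $\varepsilon^{1/4}\sum_{j=1}^{[s/\sqrt\varepsilon]} \bar g(\tilde Z^{x,\varepsilon}_j)$ converges in law, as $\varepsilon\downarrow 0$, to $N(0,\bar Q)$-scaled Brownian motion $\sqrt{s}\,B_{\bar Q}$ run up to time $s\in[0,b]$, uniformly in $x$ (the uniformity in $x$ coming, as elsewhere in the paper, from the uniform strong Doeblin condition, Assumption~5). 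Hence
\[
\limsup_{\varepsilon\downarrow 0}\sup_{x}\P\Big(\varepsilon^{1/4}\max_{0\le k\le b/\sqrt\varepsilon}\Big|\sum_{j=1}^{k}\bar g(\tilde Z^{x,\varepsilon}_j)\Big|>\eta/4\Big)
\le \P\Big(\sup_{0\le s\le b}\sqrt{s}\,|B_{\bar Q}(1)| \;\text{-type bound}\Big),
\]
which is $\le \P(\sup_{0\le s\le b}|B_{\bar Q}(s)| > \eta/4)$ and can be made $<\eta/2$ by Doob's inequality once we let $\delta\to 0$.

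The role of $\delta$ enters through the $O(1)$ additive error and through $\bar Q=\bar Q(\delta)$ and $b$: as in Theorem~\ref{CLT_for_first_hit}, the limiting covariance of a single upcrossing displacement is $\delta(1+a(\delta))^2\bar Q$, i.e.\ it is proportional to $\delta$, so the whole Brownian bound above scales like $\sqrt\delta$ and shrinks as $\delta\downarrow 0$; choosing $\delta_0$ small enough makes the limiting probability less than $\eta$, and choosing $\varepsilon$ small enough (uniformly in $x$) matches the prelimit to within $\eta$. Assembling the three contributions — the $O(1)$ interior-sojourn error, the event $\{\tau>b/\sqrt\varepsilon\}$, and the centered maximal term — and using that all three bounds are uniform in $x\in\mathbb{R}^2$ by periodicity and the uniform Doeblin condition, gives the claim. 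The main obstacle is the maximal invariance principle with the required uniformity in $x$: one needs not just convergence of $\varepsilon^{1/4}\sum_{j\le k}\bar g$ at a fixed terminal $k$, but tightness of the whole rescaled path in $\CC([0,b];\mathbb{R}^2)$ uniformly over starting points. This follows from a standard Markov-chain invariance principle (e.g.\ the martingale-approximation approach, using the resolvent/Poisson equation for $\bar g$ which is solvable by the exponential mixing, together with the uniform moment bounds on the martingale increments and on the correction term); the uniformity in $x$ is then inherited, exactly as $\bar Q(\varepsilon)\to\bar Q$ was shown to be uniform in the proof of Lemma~\ref{abstractlemma}, from the uniform strong Doeblin property.
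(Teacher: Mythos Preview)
Your proposal is correct and follows exactly the route the paper itself sketches: reduce the supremum of $|X^{x,\varepsilon}_t-x|$ over $[0,\sigma^{x,\delta,\varepsilon}_1]$ to the running maximum of $\sum_{j\le k} g(\tilde Z^{x,\varepsilon}_j)$ up to the killing time $\sigma$, control $\sigma$ via the estimates from Lemma~\ref{lem:big_set}, and invoke the functional invariance principle for the uniformly Doeblin chain. One small correction to your exposition: $\bar Q$ does \emph{not} depend on $\delta$ (the limiting chain $Z^{x,0}_\cdot$ lives on $\mathcal L$ and never sees $\partial V^\delta$); the $\delta$-dependence that makes the Brownian bound shrink enters only through the time horizon $b$, since $\int_X J^\delta\,d\lambda^0\sim\delta^{-1}$ forces $\sigma\sqrt\varepsilon$ to be of order $\delta$, so $b=b(\rho,\delta)$ may be taken proportional to $\delta$ --- and then $\P(\sup_{0\le s\le b}|B_{\bar Q}(s)|>\eta/4)\to 0$ as $\delta\downarrow 0$, which is the mechanism you correctly identify.
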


\section{Proof of Theorem \ref{main_thm}} \label{secttt}

The first step in the proof of Theorem~\ref{main_thm} is to show tightness of the family of measures induced by $\varepsilon^{1/4}(X_t^{x,\varepsilon}-x)$, $0 < \varepsilon \leq 1$, $x \in \mathbb{R}^2$. We will then show the convergence of one-dimensional distributions. The convergence of finite-dimensional distributions (and therefore the statement of the theorem) will then follow from the Markov property.

Define $D_t^{y,\delta}$ to be the number of downcrossings from $\delta$ to $0$ by the trajectory of the process $Y^y_t$ up until time $t$, where we start counting after the first visit to the vertex. Namely,  set $\theta_0^{\delta}=0$,  $\tau_0^{\delta}=\inf\{t\geq0: Y^y_t=0\}$, and recursively define
\[
\theta_n^{\delta}=\inf\{t\geq\tau_{n-1}^{\delta}: Y^y_t=\delta\},~~~ \tau_n^{\delta}=\inf\{t\geq\theta_n^{\delta}: Y^y_t=0\},~~~n \geq 1.
\]
Finally, let $D_t^{y,\delta}=\sup\{n\geq 0: \tau_n^{\delta} \leq t\}$.

\begin{lemma}\label{lemma_downcrossing}
We have
\[
\lim_{\delta \downarrow 0} \E|\delta D_t^{y,\delta}
- L_t^y|  = 0
\]
for each $t>0$ and $y\in G$.
\end{lemma}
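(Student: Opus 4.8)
The plan is to realize $\delta D_t^{y,\delta}$ as a functional of the local time $L^y_\cdot(\cdot)$ of the diffusion $Y^y$ on the graph and then let $\delta\downarrow 0$ using the regularity properties built into Definition~\ref{def_local_time}. First I would recall the standard downcrossing representation of local time for one-dimensional diffusions: on each edge $I_i$, in the natural scale of the generator $L_i$, the number of downcrossings of an interval of length $h$ (measured in the scale coordinate), multiplied by $h$, converges as $h\downarrow 0$ to the local time at the endpoint in the corresponding time parametrization. The subtlety here is that the interval $[0,\delta]\subset I_i$ must be transported into scale coordinates, and the factor relating $\delta$ to the scale-length of $[0,\delta]$ is governed by the diffusion coefficient $a(i,y)$ and the drift $b(i,y)$ near the vertex; since $a^{-2}$ is locally integrable near $O$ (as noted after Definition~\ref{def_local_time}), the relevant scale function is continuous up to $O$, and the ratio tends to a finite nonzero constant as $\delta\to0$. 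The normalization in part 3 of Definition~\ref{def_local_time} (the occupation-time formula with the weight $a^2$) is precisely the one that makes $\delta D^{y,\delta}_t \to L^y_t(O)$ the clean statement, with no extra multiplicative constant.

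The key steps, in order, would be: (i) For a fixed edge $I_i$, decompose the excursions of $Y^y$ away from the vertex $O$; away from $O$ the process never reaches another vertex in bounded time with the relevant weight, so on the event that $Y^y$ stays in $\overline{I_i}$ near $O$ we are in the classical one-dimensional picture. (ii) Apply the downcrossing theorem for the scalar diffusion with generator $L_i$, obtaining $\delta D_t^{y,\delta}\to \lim_{y\to O, y\in I_i} L^y_t(y)$ in $L^1$ along each edge-excursion, uniformly because of the uniform integrability that follows from the occupation formula (part 3) applied to $f=\mathbf 1$, which bounds $\sup_{y}\E L^y_t(y)$ in terms of $\E\int_0^t a^2(Y_s)\,ds\le t\sup a^2<\infty$. (iii) Sum over edges: since $D_t^{y,\delta}$ counts downcrossings to the single point $0$ regardless of which edge the preceding upcrossing to level $\delta$ occurred on, the limit is $\sum_{i=1}^n \lim_{y\to O,\,y\in I_i} L^y_t(y) = L^y_t(O)$ by part 4 of Definition~\ref{def_local_time}. (iv) Upgrade almost-sure/in-probability convergence to $L^1$ convergence using the uniform bound $\delta D^{y,\delta}_t \le$ (a quantity with bounded expectation); concretely, one controls $\E[\delta D^{y,\delta}_t]$ by a Tanaka/occupation-time estimate and dominates, then invokes Vitali.

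The main obstacle I expect is the behavior at the interior vertex $O$: the gluing conditions (the $\alpha_i$-weighted derivative condition in $D(A)$) mean that excursions onto different edges interleave in a delicate way, and one must make sure the downcrossing count $D^{y,\delta}_t$ is not corrupted by the process making many short excursions that reach level $\delta$ on varying edges. The way to handle this is to note that, by the strong Markov property, between the time the process leaves $0$ and next returns to $0$ it lies in the closure of a single edge (it cannot jump across $O$), so each downcrossing is unambiguously assigned to one edge; one then runs the edge-wise downcrossing limit and sums, and the only thing to check is that the error terms (excursions that reach level $\delta$ but are cut off by time $t$, or excursions near the exterior vertices) vanish as $\delta\to0$ — this follows from continuity of $y\mapsto L^y_t(y)$ away from $O$ and from $\delta\cdot(\text{number of such boundary excursions})\to0$, which is where the local integrability of $a^{-2}$ near $O$ and the finiteness of $\E L^y_t(y)$ are used. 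The remaining parts are routine one-dimensional diffusion theory plus the bookkeeping in Definition~\ref{def_local_time}.
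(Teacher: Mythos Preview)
Your sketch captures the right ingredients and, in particular, correctly isolates the one nontrivial modification the paper flags: replacing the uniform non-degeneracy $a(i,\cdot)\geq c>0$ by local integrability of $a^{-2}$ near the vertex. The paper itself gives no self-contained argument here; it simply refers to Section~2 of \cite{freidlin2000sheu} and records that this single change suffices. The route taken there (and implicitly in the paper, cf.\ the use of Lemma~2.3 of \cite{freidlin2000sheu} in the proof of Lemma~\ref{lem:bound_local_time_moment}) is more direct than your edge-by-edge decomposition: one applies an It\^o--Tanaka formula on the graph to $F(y)=|y-O|$, so that $|Y^y_t|$ is a continuous one-dimensional semimartingale whose local time at $0$ is $L^y_t$ in the normalization of Definition~\ref{def_local_time}(3), and then invokes the standard \emph{semimartingale} downcrossing theorem in one step. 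Your step~(ii), ``apply the downcrossing theorem for the scalar diffusion with generator $L_i$,'' is not quite well-posed as written, since the graph process does not restrict to a diffusion on a single edge (it leaves through $O$ onto other edges); you would have to recast it excursion-theoretically (each downcrossing corresponds to an excursion from $O$ whose height reaches $\delta$, and the L\'evy count is then controlled via the excursion measure on each edge). With that adjustment your route also goes through, and the summation in Definition~\ref{def_local_time}(4) is exactly what reconciles the edge-wise limits with $L^y_t(O)$; but the It\^o--Tanaka route sidesteps all of the edge bookkeeping.
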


The proof of this result is almost identical to  \cite[Section 2]{freidlin2000sheu}, the only difference being the replacement of the condition $a(i,y)\geq c>0$ by the local integrability of $(a(i,y))^{-2}$ (and hence of $(a(i,y))^{-1}$) at the interior vertex. As already noted earlier, this is indeed the case here since our graph process arises from the averaging of a Hamiltonian, see \cite[Chapter 8]{freidlin2012random}, so that $a^{-2}(i,y)$ only diverges logarithmically as $y \to 0$.

For the proof of tightness, we are going to need the following two simple results.

\begin{lemma}\label{lem:technical1}
Let $Z_i$ be a sequence of independent zero mean variables with a common distribution, such that all the moments are finite. Then there exists a universal constant $C$ such that
\[
\P\Big(l^{-1/2}\max_{1\leq m\leq l}|Z_1+\ldots+Z_m|> K\Big) \le C {\E |Z_i|^{10} \over K^{10}}\;,
\]
for all $K > 0$.
\end{lemma}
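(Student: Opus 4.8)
The plan is to prove this maximal inequality by combining the Doob-type martingale maximal inequality with the standard Marcinkiewicz--Zygmund / Rosenthal bound on sums of i.i.d.\ mean-zero variables. First I would observe that the partial sums $M_m = Z_1 + \ldots + Z_m$ form a martingale with respect to the natural filtration, so that $|M_m|$ is a nonnegative submartingale. Applying Doob's $L^{10}$ maximal inequality gives
\[
\E\Big(\max_{1\leq m \leq l}|M_m|^{10}\Big) \le \Big(\tfrac{10}{9}\Big)^{10}\,\E|M_l|^{10}\;.
\]
Then I would invoke the Rosenthal (or Marcinkiewicz--Zygmund) inequality for sums of independent mean-zero random variables with finite tenth moment: there is a universal constant $C_0$ such that
\[
\E|M_l|^{10} \le C_0\Big( l^{5}\,(\E Z_i^2)^5 + l\,\E|Z_i|^{10}\Big) \le 2C_0\, l^{5}\,\E|Z_i|^{10}\;,
\]
where in the last step I used $l \le l^5$ and $\E Z_i^2 \le (\E|Z_i|^{10})^{1/5}$ by Jensen's inequality, so both terms are dominated by $l^5 \E|Z_i|^{10}$.

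Combining the two displays yields $\E\big(\max_{1\leq m\leq l}|M_m|^{10}\big) \le C\, l^5\, \E|Z_i|^{10}$ for a universal constant $C$. The statement then follows immediately from Markov's inequality applied to the nonnegative random variable $\max_{1\leq m\leq l}|M_m|^{10}$: for any $K>0$,
\[
\P\Big(l^{-1/2}\max_{1\leq m\leq l}|M_m| > K\Big) = \P\Big(\max_{1\leq m\leq l}|M_m|^{10} > l^{5} K^{10}\Big) \le \frac{\E\big(\max_{1\leq m\leq l}|M_m|^{10}\big)}{l^{5}K^{10}} \le C\,\frac{\E|Z_i|^{10}}{K^{10}}\;.
\]

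There is no serious obstacle here; the only point requiring a little care is citing (or quoting with the right normalization) the Rosenthal inequality with the correct dependence on $l$ — namely that the $l^5$ term comes from the variance contribution and the $l$ term from the tenth absolute moment — and then checking that under the finiteness-of-all-moments hypothesis these combine into the clean bound $C l^5 \E|Z_i|^{10}$. Everything else is Doob's inequality and Markov's inequality, both entirely standard. One could alternatively avoid Doob's inequality altogether by using a direct chaining/Etemadi-type argument, but the martingale route is the shortest. The exponent $10$ plays no special role beyond being an even integer large enough that $l^{-1/2}$ scaling in the maximum is not tight, which is exactly what is needed for the later tightness argument.
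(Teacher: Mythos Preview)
Your proof is correct and follows essentially the same route as the paper: Markov/Chebyshev on the tenth power, Doob's $L^{10}$ maximal inequality for the martingale of partial sums, and then the bound $\E|M_l|^{10} \le C\, l^5\, \E|Z_i|^{10}$. The only cosmetic difference is that the paper obtains this last bound by direct expansion (``independence of $Z_i$ and trivial combinatorial considerations'') rather than invoking Rosenthal by name.
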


\begin{proof}
By taking the $10$th power and using Chebyshev's inequality,
\begin{equation}\label{eq:bound_on_max}
\P\left(\max_{1\leq m\leq l}|Z_1+\ldots+Z_m|\geq K \sqrt l\right)\leq {1\over K^{10} l^5} \E\max_{1\leq m\leq l}|Z_1+\ldots+Z_m|^{10}\;.
\end{equation}
Since the $Z_i$ are independent centered random variables, the partial sums form a martingale so that, by Doob's maximal inequality,
\[
\sup_{l \geq 1} \left( l^{-5}\E\max_{1\leq m\leq l}|Z_1+\ldots+Z_m|^{10} \right) \leq\left(\frac{10}{9}\right)^{10}\sup_{l \geq 1} \E\left|\frac{Z_1+\ldots+Z_l}{\sqrt{l}}\right|^{10} \le C \E |Z_i|^{10}\;,
\]
where the last inequality follows from the independence of $Z_i$ and trivial combinatorial considerations. The claim now follows at once.
\end{proof}

\begin{lemma}\label{lem:bound_local_time_moment}
We have $\limsup_{t\to 0}\E(L_t^0/t^{1/2})^n<\infty$ for every $n \in \mathbb{N}$.
\end{lemma}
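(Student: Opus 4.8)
The plan is to bound the moments of $L_t^0$ by comparison with the downcrossing count $D_t^{0,\delta}$ from Lemma~\ref{lemma_downcrossing}, and then to estimate the moments of the downcrossing count using the fact that the time between successive visits to the vertex $O$ is bounded below (in a suitable stochastic sense) by a quantity of order $\delta^2$, reflecting the diffusive scaling of the process $Y^y$ near the vertex. Concretely, fix a small $\delta>0$. On the one hand, $\delta D_t^{0,\delta}\to L_t^0$ in $L^1$ as $\delta\downarrow 0$; on the other hand, for each fixed $\delta$ the random variable $D_t^{0,\delta}$ counts how many of the i.i.d.\ excursion times $\tau_n^\delta-\tau_{n-1}^\delta$ fit into $[0,t]$. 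The excursion from $0$ up to level $\delta$ and back takes, by the explicit form of the generator $L_i$ and the local integrability of $a^{-2}$ near $O$, an expected time of order $\delta^2|\log\delta|$ (the logarithm being the mild singularity of $a^{-2}$), and more importantly its distribution has all moments with the diffusive scaling built in. Thus $D_t^{0,\delta}$ is stochastically dominated (up to harmless constants) by a renewal counting process whose increments are of order $\delta^2$, which gives $\E (D_t^{0,\delta})^n \le C_n (t/\delta^2)^n$ for $t$ small, hence $\E(\delta D_t^{0,\delta})^n \le C_n t^n$; but this alone is not strong enough, since we need the $t^{n/2}$ scaling.

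To capture the correct $t^{1/2}$ power, I would instead work directly with the process $Y^0$ stopped at a fixed small level $\delta$, i.e.\ consider $\tilde L_t := L_{t\wedge\theta_1^\delta}^0(O)$, and use the following two ingredients. First, a Ray--Knight / Tanaka-type identity: for a one-dimensional diffusion on the half-line $[0,\delta]$ with generator $\tfrac{a^2}{2}\partial_y^2 + b\,\partial_y$ reflected appropriately, the local time at $0$ up to the first hitting of $\delta$ has an explicit Laplace transform, and by the scaling $y\mapsto \delta y$ one reads off that this local time is of order $\delta$ with an exponential-type tail; consequently $\E(L^0_{\theta_1^\delta})^n \le C_n\delta^n$. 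Second, a self-similarity estimate near $t=0$: since $Y^0$ near the vertex behaves (after the time change of Definition~\ref{def_local_time}) like a multiple of a reflected/skew Brownian motion, we have the distributional comparison $L^0_t \stackrel{d}{\approx} \sqrt t\, L^0_1$ for small $t$ in the sense that $t^{-1/2}L^0_t$ is tight. The cleanest route is: for any $K>0$, $\P(L^0_t > K\sqrt t) \le \P(\text{the diffusion on the graph spends local time} > K\sqrt t \text{ at } O \text{ before time } t)$, and by the strong Markov property applied at successive returns to $O$ together with the estimate $\E(L^0_{\theta_1^\delta})^n\le C_n\delta^n$, one converts the event $\{L^0_t>K\sqrt t\}$ into requiring at least $\sim K/\sqrt{t}$ independent "units" of local time of size $\sqrt t$ each, whose occurrence before time $t$ has super-exponentially small probability in $K$. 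Integrating the tail bound $\P(t^{-1/2}L^0_t>K)\le C\exp(-cK)$ then yields $\sup_{t\le t_0}\E(L^0_t/\sqrt t)^n < \infty$.

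The main obstacle I anticipate is handling the behavior at the interior vertex $O$ precisely enough: the process $Y^0$ is not a plain Brownian motion there but a gluing of $n$ diffusions with the non-trivial vertex condition $\sum_i\alpha_i D_iF(O)=0$ and with coefficients $a(i,y)$ whose square inverse diverges logarithmically as $y\to 0$. I would deal with this by a comparison argument: bound $a(i,y)$ above and below by constants on $[0,\delta]$ (losing only a $|\log\delta|$ factor in time scales that is irrelevant for the moment bounds once $t\to 0$), so that the graph process is sandwiched between two processes obtained by gluing Brownian motions with the same vertex weights $\{\alpha_i\}$; for the latter the local time at the vertex is explicit and the scaling $L^0_t\sim\sqrt t$ is exact. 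The logarithmic correction only helps (it makes excursions longer, hence the local time smaller), so the upper bound survives. Carrying out this comparison rigorously — matching the local times through the time change in part~3 of Definition~\ref{def_local_time} — is the one genuinely technical point; everything else is the standard moment/tail bookkeeping for Brownian local time. This argument also parallels, and can cite, the analysis of Section~2 of \cite{freidlin2000sheu}.
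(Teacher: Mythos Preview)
Your proposal is circuitous and leaves the essential step undone, while the paper's proof is a three-line argument that you have missed entirely.

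The paper applies an It\^o--Tanaka-type formula on the graph (Lemma~2.3 of \cite{freidlin2000sheu}) with $F(y)=|y-O|$ to obtain
\[
|Y^0_t|=\int_0^t a(i(s),Y^0_s)\,dW_s+\int_0^t b(i(s),Y^0_s)\,ds+L_t^0\;,
\]
and then invokes the uniqueness of the Skorokhod reflection to write
\[
L_t^0=\max_{0\le s\le t}\Bigl(-\int_0^s a(i(u),Y^0_u)\,dW_u-\int_0^s b(i(u),Y^0_u)\,du\Bigr)\;.
\]
Since $a$ and $b$ are bounded on $G$, time-changing the stochastic integral to a Brownian motion $B$ immediately gives $t^{-1/2}L_t^0\le C\bigl(\max_{0\le u\le C}|B_u|+\sqrt t\bigr)$, and all moments are finite uniformly as $t\downarrow 0$. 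No excursion theory, downcrossing counts, or comparison arguments are needed.

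Your route has two concrete gaps. First, the ``self-similarity estimate'' you invoke --- that $t^{-1/2}L^0_t$ is tight --- is precisely the statement of the lemma; you cannot assume it in order to prove it, and the subsequent sentence converting $\{L^0_t>K\sqrt t\}$ into ``$\sim K/\sqrt t$ independent units'' does not produce a rigorous tail bound without further work of exactly the kind you are trying to avoid. Second, the comparison with glued Brownian motions that you flag as ``the one genuinely technical point'' is not merely technical: since $a(i,y)\to 0$ as $y\to 0$ (this is what $a^{-2}$ diverging logarithmically means), there is no uniform lower bound by a positive constant on $[0,\delta]$. You are right that the slowdown should only decrease the local time, but turning that heuristic into an actual inequality between local times of two different processes via the occupation-time formula is the whole problem, and you have not done it. The Skorokhod representation bypasses all of this in one line.
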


\begin{proof}
By Lemma 2.3 in \cite{freidlin2000sheu} with $F(y)=|y-O|$ being the distance of $y\in G$ from the interior vertex, we get that
\[
|Y^0_t|=\int_0^ta(i(s),Y^0_s)dW_s+\int_0^tb(i(s),Y^0_s)\,ds+L_t^0\;.
\]
By the uniqueness of the Skorokhod-reflection, see e.g. \cite[Section 3.6.C]{karatzas1991brownian}, we have the representation
\begin{equation} \label{repre}
L_t^0=\max_{0\leq s\leq t}\left(-\int_0^sa(i(s),Y^0_s)\,dW_s-\int_0^sb(i(s),Y^0_s)\,ds\right)\;.
\end{equation}
This implies that there is a standard Brownian motion $B$ such that
\[
\left(\frac{L_t^0}{t^{1/2}}\right)^n\leq C\left(\max_{0\leq s\leq t}| B_{\frac{1}{t}\int_0^s(a(i(s),Y^0_s))^2ds}| + t^{-1/2}\int_0^t|b(i(s),Y^0_s)|\,ds \right)^n\;,
\]
and thus the proof is finished by noting that $a$ and $b$ are bounded on the graph.
\end{proof}

\begin{lemma}\label{lem:thightness}
The family of measures induced by $\{\varepsilon^{1/4}(X_t^{x,\varepsilon}-x)\}_{0<\varepsilon\leq 1, x \in \mathbb{R}^2}$ is tight.
\end{lemma}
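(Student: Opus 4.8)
The plan is to verify the classical tightness criterion in $\CC([0,\infty);\mathbb{R}^2)$: since $\varepsilon^{1/4}(X^{x,\varepsilon}_0-x)=0$ for all $x,\varepsilon$, it suffices to show that for each $\eta>0$
\[
\lim_{h\downarrow 0}\frac1h\,\limsup_{\varepsilon\downarrow 0}\ \sup_{x\in\mathbb{R}^2}\P\Big(\varepsilon^{1/4}\sup_{0\le u\le h}|X^{x,\varepsilon}_u-x|>\eta\Big)=0,
\]
together with the elementary tightness of the subfamily with $\varepsilon$ bounded away from $0$, for which $|X^{x,\varepsilon}_u-x|\le\varepsilon_0^{-1}\|v\|_\infty u+|W_u|$ gives a modulus of continuity uniform in $x$ and $\varepsilon\in[\varepsilon_0,1]$. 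Indeed, by the Markov property and a partition of $[0,T]$ into $\lceil T/h\rceil$ intervals, the displayed quantity controls the modulus of continuity on $[0,T]$, so it remains to establish the display.

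Fix an auxiliary $\delta>0$ (sent to $0$ only after $\varepsilon$, and chosen small as a function of $h$). Using the excursions of Section~\ref{Section_Displacement_Separatrix}, let $N=N(x,\delta,\varepsilon,h)$ be the number of $m\ge1$ with $\sigma^{x,\delta,\varepsilon}_m\le h$; expressing the running displacement through the embedded chain $X_{\sigma_m}$ and using that the process stays in one cell until its first visit to $\mathcal{L}$ yields
\[
\varepsilon^{1/4}\sup_{0\le u\le h}|X^{x,\varepsilon}_u-x|\ \le\ C\varepsilon^{1/4}+\varepsilon^{1/4}\max_{0\le m\le N}\Big|\sum_{k=1}^{m}S^{x,\delta,\varepsilon}_k\Big|+\max_{1\le m\le N+1}\varepsilon^{1/4}I_m,
\]
with $C$ the maximal cell diameter and $I_m$ the wandering during the $m$-th (possibly partial) excursion. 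Let $m_0=m_0(h,\delta)$ be a deterministic threshold with $\delta m_0$ of order $h^{1/2-\kappa}$, $\kappa>0$ small. On $\{N\le m_0\}$ the middle term is at most $\varepsilon^{1/4}\max_{0\le m\le m_0}|\sum_{k=1}^m S^{x,\delta,\varepsilon}_k|$; by Lemma~\ref{indlemma} the $\varepsilon^{1/4}S^{x,\delta,\varepsilon}_k$ converge to i.i.d.\ copies of $\sqrt{\delta}(1+a(\delta))\sqrt{\xi}\,N(0,Q)$ (Theorem~\ref{CLT_for_first_hit}), a centred law with finite tenth moment of order $\delta^5$, so Lemma~\ref{lem:technical1} with $K=\eta/\sqrt{m_0}$ bounds $\limsup_\varepsilon\P(\max_{m\le m_0}|\varepsilon^{1/4}\sum_{k=1}^m S_k|>\eta)$ by $C\delta^5 m_0^5/\eta^{10}=O(h^{5/2-5\kappa}/\eta^{10})$. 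For the last term, by the strong Markov property at $\sigma_{m-1}$ each $\varepsilon^{1/4}I_m$ obeys $\limsup_\varepsilon\sup_z\P(\varepsilon^{1/4}I^z>\eta)\le C\delta^5/\eta^{10}$ --- the power tail that the proof of Lemma~\ref{displacement} (via the running maximum in Lemma~\ref{abstractlemma}) actually delivers --- so a union bound over the at most $m_0+1$ excursions contributes $O(\delta^4 h^{1/2-\kappa}/\eta^{10})$.

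It remains to bound $\P(N>m_0)$. Now $N$ is the number of downcrossings of $\Gamma(X^{x,\varepsilon})$ from level $\delta$ to $O$ in $[0,h]$ (after the first visit to $\mathcal{L}$), so by the Freidlin--Wentzell convergence $\Gamma(X^{x,\varepsilon})\Rightarrow Y^{\Gamma(x)}$ and the a.s.\ continuity of the downcrossing count along realisations of the limit, $\limsup_\varepsilon\P(N>m_0)\le\P(\delta D^{\Gamma(x),\delta}_h\ge\delta m_0)$. Writing $\delta D^{\Gamma(x),\delta}_h=L^{\Gamma(x)}_h+E$, the term $\P(L^{\Gamma(x)}_h\ge\tfrac12\delta m_0)$ is handled by a Chebyshev bound at a high moment $n$ using Lemma~\ref{lem:bound_local_time_moment} --- extended to an arbitrary starting point via $\E(L^{\Gamma(x)}_t)^n\le\E(L^0_t)^n$ (strong Markov at the first visit to $O$), which gives $\E(L^{\Gamma(x)}_h)^n\le C_n h^{n/2}$ --- while $\P(E\ge\tfrac12\delta m_0)$ is handled by Markov, since Lemma~\ref{lemma_downcrossing} gives $\E|E|=\rho(\delta,h)\to0$ as $\delta\to0$ (uniformly in $x$). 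Taking $n$ large (so $n\kappa>1$) and then $\delta=\delta(h)$ small enough that $\rho(\delta(h),h)$ is negligible against the relevant power of $h$, both pieces are $o(h)$. Collecting the three contributions and dividing by $h$, the $\limsup_\varepsilon$ tends to $0$ uniformly in $x$ as $h\downarrow0$, which proves the lemma.

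\paragraph{Main obstacle.}
The crux is the interplay between the \emph{random} number $N\asymp\sqrt h/\delta$ of excursions in $[0,h]$ and the two sums over excursions: $N$ must be controlled sharply enough to beat the factor $1/h$ in the tightness criterion --- which is exactly why Lemma~\ref{lem:bound_local_time_moment} is stated with all moments and why the maximal inequality Lemma~\ref{lem:technical1} carries a tenth moment --- and the single-excursion wandering must have a power-of-$\delta$ tail, stronger than the bare statement of Lemma~\ref{displacement}, so that it survives being summed over $\asymp\sqrt h/\delta$ terms. The remaining point is the bookkeeping of the order of limits: $\delta$ is auxiliary, taken to $0$ only after $\varepsilon$ and chosen as a sufficiently small function of $h$, so that all $\delta$-dependent error terms become negligible relative to $h$.
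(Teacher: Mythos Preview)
Your proof is correct and follows essentially the same route as the paper: reduce tightness to a short-time modulus-of-continuity estimate, fix a deterministic threshold $m_0$ on the number of excursions, control $\max_{m\le m_0}|\sum_{k\le m} \varepsilon^{1/4}S_k|$ via Lemma~\ref{lem:technical1} and Lemma~\ref{indlemma}, and control $\P(N>m_0)$ via the downcrossing representation (Lemma~\ref{lemma_downcrossing}) together with Lemma~\ref{lem:bound_local_time_moment}. The paper takes $m_0=[r^{1/4}/\delta]$ (your $\kappa=1/4$) and uses the eighth moment of $L^0_r$ where you allow general $n$, but the mechanism is identical.

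The one place where you are more careful than the paper is the within-excursion wandering term $\max_{m\le m_0+1}\varepsilon^{1/4}I_m$. The paper simply writes ``Combining \eqref{eq:tt02} and Lemma~\ref{displacement}'' to pass to \eqref{fr1}, which tacitly requires a union bound over $\sim k/\delta$ excursions and hence a tail for a single excursion that is $O(\delta)$ or better --- stronger than the bare statement of Lemma~\ref{displacement}. You make this explicit by invoking the power-of-$\delta$ tail that the invariance-principle proof of Lemma~\ref{displacement} actually delivers; that is indeed what both arguments need. Your remark on the order of limits (fix $\delta=\delta(h)$, then send $\varepsilon\downarrow0$) matches the paper's logic as well.
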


\begin{proof}
By the Markov property, it is sufficient to prove that for each $\eta > 0$ there are $r \in (0,1)$ and $\varepsilon_0 > 0$ such that
\begin{equation} \label{teq1}
\P \Big( \sup_{0 \leq t \leq r}  | \varepsilon^{1/4}(X_t^{x,\varepsilon}-x) | > \eta \Big) \leq r \eta\;,
\end{equation}
for all $\varepsilon \leq \varepsilon_0$ and $x \in \mathbb{R}^2$.

Take $Z = \sqrt{\xi} N(0, Q)$ and let $Z^\delta_1$, $Z^\delta_2$, etc.\ be independent identically distributed. Assume that their distribution coincides with the distribution of $\sqrt{\delta} (1 + a(\delta)) Z$, where $a(\delta)$ is the same as in the right hand side of (\ref{mne}).

Applying Lemma~\ref{lem:technical1} with $K = \eta  k^{-1/2}/4$, we see that for a given $\eta>0$, there are $k_0 \in (0,1)$ and $\delta_1>0$ such that
\begin{equation} \label{tt01}
\P \Big(\max_{1 \leq m \leq k/\delta} |Z^\delta_1+\ldots+Z^\delta_m| > \eta/4\Big) \leq k^4 \eta/4\;,
\end{equation}
whenever $k\in (0,k_0)$ and $\delta \in (0,\delta_1)$. From (\ref{tt01}) and Lemma~\ref{indlemma}, it follows that there is $\varepsilon_1(k,\delta) > 0$ such that
\begin{equation}\label{eq:tt02}
\P \Big(\max_{1 \leq m \leq k/\delta} |S^{x,\delta,\varepsilon}_1+\ldots+S^{x,\delta,\varepsilon}_m| > \eta/3\Big) \leq k^4 \eta/3\;,
\end{equation}
provided that $\varepsilon \leq \varepsilon_1(k,\delta)$. It is not difficult to see that this estimate and those below are uniform in $x$. Combining  \eqref{eq:tt02} and Lemma~\ref{displacement}, it now follows that there is $\varepsilon_2(k,\delta) > 0$ such that
\begin{equation} \label{fr1}
\P \Big(\sup_{0 \leq t \leq \sigma^{x, \delta ,\varepsilon}_{[k/\delta]}} \varepsilon^{1/4}|X_t^{x,\varepsilon}-x| > \eta/2\Big) \leq k^4  \eta/2\;.
\end{equation}
provided that $\varepsilon \leq \varepsilon_2(k,\delta)$.

Note that by Lemma \ref{lemma_downcrossing} for a given $\eta > 0$, we can find $r > 0$ and $\delta_2 = \delta_2(r) > 0$ such that
\begin{equation} \label{tt02}
\sup_{y \in G} \P(D_{r}^{y,\delta} \geq r^{1/4}/\delta) <\sup_{y\in G}\P(L_r^{y}\geq r^{1/4})+\eta r/4\leq r^2\E(L_r^0/r^{1/2})^8+\eta r/4 \leq \eta r/3
\end{equation}
if $\delta \leq \delta_2$, where the second inequality follows from the Chebyshev inequality and the strong Markov property, while the last
inequality follows from Lemma~\ref{lem:bound_local_time_moment}. As a consequence of Lemma~\ref{indlemma}, we see that there is $\varepsilon_3(r, \delta)$ such that
\begin{equation} \label{fr2}
\P\Big( \sigma^{x, \delta ,\varepsilon}_{[r^{1/4}/\delta]} < r \Big) \leq \P(D_r^{y,\delta}\geq r^{1/4}/\delta)+\eta r/6
\end{equation}
if $\varepsilon \leq \varepsilon_3(r, \delta)$.

Clearly,
\[
\P \Big( \sup_{0 \leq t \leq r}  | \varepsilon^{1/4}(X_t^{x,\varepsilon}-x) | > \eta \Big)\leq\P\Big(\sigma_{[r^{1/4}/\delta]}^{x,\delta,\varepsilon}<r\Big)+\P\Big(\sup_{0\leq t\leq\sigma_{[r^{1/4}/\delta]}^{x,\delta,\varepsilon}}\varepsilon^{1/4}|X_t^{x,\varepsilon}-x|>\eta\Big)
\]
so that, choosing  $r > 0$ sufficiently small, combining (\ref{fr1}) with $k = r^{1/4}$, (\ref{tt02}), and (\ref{fr2}) with  $\delta < \min(\delta_1, \delta_2)$ and $\varepsilon<\min(\varepsilon_1(k, \delta),\varepsilon_2(k,\delta), \varepsilon_3(r,\delta))$, we obtain (\ref{teq1}), which implies tightness.
\end{proof}

For the proof of convergence of one-dimensional distributions, we are going to need a lemma that is a straightforward consequence of tightness.

\begin{lemma}\label{lem:close_stopping}
For $\eta>0$ and  $f\in \CC_b(\mathbb{R}^2)$ uniformly continuous, we can find an $r>0$ such that
\begin{equation} \label{rone}
\sup_{\varepsilon\in(0,1]}|\E f(\varepsilon^{1/4} (X_{\tau''}^{x, \varepsilon}-x) ) - \E f(\varepsilon^{1/4} (X_{\tau'}^{x, \varepsilon}-x) ) | < \eta,
\end{equation}
\begin{equation} \label{rtwo}
|\E f(\tilde{W}^Q_{\tau''}) - \E f(\tilde{W}^Q_{\tau'})| < \eta
\end{equation}
for each pair of stopping times $\tau' \leq \tau''$ that satisfy $ \P (\tau'' > \tau' +r) \leq r$.
\end{lemma}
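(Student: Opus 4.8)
The plan is to derive both bounds from one underlying fact, applied to each process separately: on a short time interval neither process can move far, uniformly in the relevant parameters. For $\varepsilon^{1/4}(X^{x,\varepsilon}_\cdot-x)$ this is precisely the tightness established in \lemref{lem:thightness}, and for $\tilde{W}^Q$ it is elementary. Throughout, fix the uniformly continuous $f\in\CC_b(\mathbb{R}^2)$ (which we may assume satisfies $\|f\|_\infty>0$) and $\eta>0$, and use uniform continuity to pick $\rho>0$ with $|f(p)-f(q)|<\eta/2$ whenever $|p-q|<\rho$. For any pair of $\mathbb{R}^2$-valued random variables $P,P'$, splitting the expectation according to whether $|P-P'|<\rho$ or not gives
\[
\bigl|\E f(P)-\E f(P')\bigr|\;\le\;\frac{\eta}{2}+2\|f\|_\infty\,\P\bigl(|P-P'|\ge\rho\bigr)\;,
\]
so in both cases it will suffice to choose $r>0$ small enough that the displacement of the relevant process between the times $\tau'$ and $\tau''$ exceeds $\rho$ with probability at most $\eta/(8\|f\|_\infty)$. (We tacitly assume $\tau'\le\tau''$ are a.s.\ finite, as is the case in all the applications of this lemma.)

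For \eqref{rone} I take $P-P'=\varepsilon^{1/4}(X^{x,\varepsilon}_{\tau''}-X^{x,\varepsilon}_{\tau'})$ and split on the event $\{\tau''\le\tau'+r\}$, whose complement has probability at most $r$ by hypothesis. On this event, $|P-P'|\le\sup_{0\le s\le r}\varepsilon^{1/4}|X^{x,\varepsilon}_{\tau'+s}-X^{x,\varepsilon}_{\tau'}|$, so applying the strong Markov property of $X^{x,\varepsilon}$ at the stopping time $\tau'$ gives
\[
\P\bigl(|P-P'|\ge\rho\bigr)\;\le\;r+\sup_{z\in\mathbb{R}^2}\P\Bigl(\sup_{0\le s\le r}\varepsilon^{1/4}|X^{z,\varepsilon}_s-z|\ge\rho\Bigr)\;.
\]
The essential point is that \lemref{lem:thightness} establishes tightness of the family $\{\varepsilon^{1/4}(X^{z,\varepsilon}_\cdot-z)\}_{0<\varepsilon\le1,\,z\in\mathbb{R}^2}$ indexed by \emph{all} starting points at once; since each of these processes starts at the origin, this tightness supplies an $r_0>0$ for which the last supremum is at most $\eta/(16\|f\|_\infty)$ whenever $r\le r_0$ and $\varepsilon\in(0,1]$. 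Choosing $r\le\min\{r_0,\eta/(16\|f\|_\infty)\}$ then makes the right-hand side above at most $\eta/(8\|f\|_\infty)$, which gives \eqref{rone}, uniformly in $\varepsilon\in(0,1]$ and in $x$.

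For \eqref{rtwo} the same scheme applies with $P-P'=\tilde{W}^Q_{\tau''}-\tilde{W}^Q_{\tau'}$: on $\{\tau''\le\tau'+r\}$ we have $|P-P'|\le\sup_{0\le s\le r}|\tilde{W}^Q_{\tau'+s}-\tilde{W}^Q_{\tau'}|$, and by the strong Markov property of Brownian motion the increment process $(\tilde{W}^Q_{\tau'+s}-\tilde{W}^Q_{\tau'})_{s\ge0}$ is again a $Q$-Brownian motion, so this supremum has the law of $\sup_{0\le s\le r}|\tilde{W}^Q_s|$, which tends to $0$ in probability as $r\downarrow0$. Hence $r$ can be further shrunk so that $\P(|P-P'|\ge\rho)\le r+\P\bigl(\sup_{0\le s\le r}|\tilde{W}^Q_s|\ge\rho\bigr)\le\eta/(8\|f\|_\infty)$, which gives \eqref{rtwo}; the $r$ claimed in the lemma is then the smaller of the two values found. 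I do not expect a substantive obstacle; the only point requiring care is securing the bound uniformly in $\varepsilon$ and $x$ while $\tau'\le\tau''$ range over arbitrary stopping times, which is exactly why the argument routes the short-time displacement estimate through the strong Markov property together with the already-proven uniform tightness of \lemref{lem:thightness}, rather than through a modulus-of-continuity estimate on a fixed time window.
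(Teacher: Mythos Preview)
Your proposal is correct and follows essentially the same approach as the paper: split on the size of the displacement using uniform continuity of $f$, control the large-displacement probability via the strong Markov property at $\tau'$, and invoke the uniform-in-$x$ tightness from \lemref{lem:thightness} (respectively the elementary Brownian estimate) to bound the short-time oscillation; the paper's proof is just a terser version of the same argument.
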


\begin{proof}
By the tightness result above, for each $\alpha > 0$ we can find $r > 0$ such that
\[
\sup_{x \in \mathbb{R}^2} \P \Big( \varepsilon^{1/4} \sup_{0 \leq t \leq r} |X_t^{x,\varepsilon}-x| > \alpha\Big) < \alpha.
\]
Using that $f$ is uniformly continuous, we can choose $\alpha(\eta)$ small enough so that we can write
\[
\E|f(\varepsilon^{1/4} (X_{\tau''}^{x, \varepsilon}-x) )- f(\varepsilon^{1/4} (X_{\tau'}^{x, \varepsilon}-x)|<\frac{\eta}{3}+\P(\varepsilon^{1/4}|X_{\tau''}^{x,\varepsilon}-X_{\tau'}^{x,\varepsilon}|>\alpha)
\]
 After conditioning on $X_{\tau'}^{x,\varepsilon}$ and using the strong Markov property, the second term is seen to be bounded from above by
\[
\sup_{x \in \mathbb{R}^2} \P\Big(\varepsilon^{1/4}\sup_{0\leq t\leq r}|X_t^{x,\varepsilon}-x|>\alpha\Big)+\P(\tau''-\tau'>r)\leq\alpha+r,
\]
which finishes the proof of (\ref{rone}) once $\alpha$ and $r$ are chosen to be  small enough. The proof of (\ref{rtwo}) is similar.
\end{proof}

Let us fix $t > 0$, $f \in \CC_b( \mathbb{R}^2)$ uniformly continuous, and $\eta > 0$. To show the convergence of one-dimensional distributions, it suffices to prove that
\begin{equation} \label{oddist}
| \E f(\varepsilon^{1/4}(X_t^{x,\varepsilon}-x)) - \E f( \tilde {W}^Q_{L^{\Gamma(x)}_t}) | < \eta
\end{equation}
for all sufficiently small $\varepsilon$. As we discussed in the introduction, the main contribution to $X_t^{x,\varepsilon}$ (found in the first term on the left hand side of (\ref{oddist})) comes from the excursions between $\mathcal{L}$ and $\partial V^\delta$, i.e., the upcrossings of $V^\delta$. Also, the local time in the second term on the left hand side of (\ref{oddist}) can be related to the number of excursions (i.e., upcrossings) between the interior vertex and the set $\Gamma(\{ x: |H(x)| = \delta \})$ on the graph $G$ that happen before time $t$. These two observations will lead us to the proof of (\ref{oddist}).

In order to choose an appropriate value for $\delta$, we need the following lemma (a simple generalization of the CLT).

\begin{lemma} \label{refinedCLT1} Suppose that $N_{\delta}$ are $ \mathbb{N}$-valued random variables independent of the family $\{Z^{\delta}_i \}$ that satisfy $ \E N_{\delta} \leq C/\delta$ for some $C > 0$. Let $f \in \CC_b( \mathbb{R}^2)$ and let $ \tilde{W}^Q_t$ be a Brownian motion with covariance $Q$, independent of $\{N_{\delta}\}$. Then
\[
\E f(Z^{\delta}_1 + \ldots + Z^{\delta}_{N_{\delta}}) - \E f(\tilde{W}^Q_{\delta N_{\delta}}) \rightarrow 0~~{as}~\delta \downarrow 0.
\]
\end{lemma}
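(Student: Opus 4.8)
The plan is to condition on $N_\delta$ and reduce the statement to a quantitative form of the central limit theorem for the i.i.d.\ family $\{Z^\delta_i\}$: the hypothesis $\E N_\delta\leq C/\delta$ controls the random, possibly large, number of summands, while the convergence $a(\delta)\to 0$ absorbs the mismatch between the scaling $\sqrt\delta(1+a(\delta))$ built into the $Z^\delta_i$ and the scaling $\sqrt{\delta m}$ of the Brownian motion evaluated at time $\delta m$.

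Write $Z^\delta_i=\sqrt\delta(1+a(\delta))W_i$, where the $W_i$ are i.i.d.\ copies of $\sqrt\xi\,N(0,Q)$; they are centered, have covariance $Q$ (since $\E\xi=1$) and all moments finite, with $\E|W_1|^2=\operatorname{tr} Q$. A preliminary step is to reduce to bounded Lipschitz $f$. Conditioning on $N_\delta$ gives $\E|Z^\delta_1+\ldots+Z^\delta_{N_\delta}|^2=(1+a(\delta))^2\operatorname{tr} Q\,\E[\delta N_\delta]\leq C'$ and $\E|\tilde W^Q_{\delta N_\delta}|^2=\operatorname{tr} Q\,\E[\delta N_\delta]\leq C'$, so both families of laws are tight. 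Given $f\in\CC_b(\mathbb{R}^2)$ and $\eta>0$, one picks a ball outside of which both laws put mass less than $\eta$, approximates $f$ uniformly on that ball by a bounded Lipschitz function $g$ with $\|g\|_\infty\leq\|f\|_\infty+1$ (e.g.\ by mollifying a compactly supported cutoff of $f$), and is left with the statement for $g$.

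So assume $f$ is bounded and $L$-Lipschitz, and set $S_m=W_1+\ldots+W_m$. Since $N_\delta$ is independent of $\{W_i\}$ and of $\tilde W^Q$,
\[
\E f(Z^\delta_1+\ldots+Z^\delta_{N_\delta})-\E f(\tilde W^Q_{\delta N_\delta})=\E\big[\psi_\delta(N_\delta)\big],\qquad \psi_\delta(m):=\E f\big(\sqrt\delta(1+a(\delta))S_m\big)-\E f\big(\sqrt{\delta m}\,G\big),
\]
with $G\sim N(0,Q)$. Inserting the law of $\sqrt{\delta m}\,S_m/\sqrt m$ and using that $x\mapsto f(\sqrt{\delta m}\,x)$ is Lipschitz with constant $L\sqrt{\delta m}$, we get for every $m\geq 1$
\[
|\psi_\delta(m)|\leq L\sqrt{\delta m}\,|a(\delta)|\,\E\big|S_m/\sqrt m\big|+L\sqrt{\delta m}\,\mathcal{W}_1\big(\operatorname{law}(S_m/\sqrt m),N(0,Q)\big)\leq L(\operatorname{tr} Q)^{1/2}|a(\delta)|\sqrt{\delta m}+LC\sqrt\delta,
\]
where $\mathcal{W}_1$ is the $1$-Wasserstein distance, $\E|S_m/\sqrt m|\leq(\operatorname{tr} Q)^{1/2}$, and $\mathcal{W}_1(\operatorname{law}(S_m/\sqrt m),N(0,Q))\leq C/\sqrt m$ by a standard quantitative multivariate central limit theorem, applicable since $\E|W_1|^3<\infty$. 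Taking expectations over $N_\delta$ and using Jensen together with $\E[\delta N_\delta]\leq C$,
\[
\big|\E f(Z^\delta_1+\ldots+Z^\delta_{N_\delta})-\E f(\tilde W^Q_{\delta N_\delta})\big|\leq L(\operatorname{tr} Q)^{1/2}|a(\delta)|\,\E\sqrt{\delta N_\delta}+LC\sqrt\delta\leq L(C\operatorname{tr} Q)^{1/2}|a(\delta)|+LC\sqrt\delta,
\]
which tends to $0$ as $\delta\downarrow 0$ since $a(\delta)\to 0$.

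There is no genuine obstacle here; the one thing to get right is the bookkeeping above, namely that each error term must carry a compensating factor $\sqrt\delta$ — arising either from the CLT rate $1/\sqrt m$ or from $a(\delta)$ — which survives the summation over the $\sim 1/\delta$ terms precisely because $\E[\delta N_\delta]\leq C$. The least automatic ingredient is the tightness of the two families (again a consequence of $\E N_\delta\leq C/\delta$), used both in the reduction to Lipschitz $f$ and in the bound on $\E\sqrt{\delta N_\delta}$. If one prefers not to invoke a rate in the multivariate CLT, one can instead split according to whether $\delta N_\delta\leq\rho^2$, lies in $(\rho^2,T]$, or exceeds $T$: on the first range $|\psi_\delta(m)|=O(L\rho)$ by the Lipschitz bound and $\E|S_m|\leq(m\operatorname{tr} Q)^{1/2}$; the last range has probability $O(1/T)$ by Markov and contributes $O(\|f\|_\infty/T)$; and on the middle range $m\to\infty$ while $\sqrt{\delta m}$ stays in $(\rho,\sqrt T]$, so the ordinary CLT for $S_m/\sqrt m$, applied through the uniformly bounded equi-Lipschitz family $\{x\mapsto f(sx):\rho<s\leq\sqrt T\}$, closes the estimate, after which one lets $\rho\downarrow 0$ and $T\uparrow\infty$.
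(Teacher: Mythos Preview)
Your proof is correct. The paper does not actually prove this lemma: it is introduced as ``a simple generalization of the CLT'' and immediately used, with no argument given. Your writeup supplies exactly the details the authors skip, and your strategy---condition on $N_\delta$, reduce to Lipschitz $f$ via the second-moment bound $\E[\delta N_\delta]\leq C$, then control $\psi_\delta(m)$ by the CLT for $S_m/\sqrt m$ together with the $a(\delta)$ correction---is the natural way to unpack the phrase ``simple generalization of the CLT.''

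One minor comment: the quantitative multivariate CLT you invoke for the $\mathcal W_1$ rate $O(1/\sqrt m)$ is a genuine theorem but perhaps heavier machinery than the authors had in mind. Your alternative argument in the final paragraph (splitting on $\delta N_\delta\leq\rho^2$, $\in(\rho^2,T]$, $>T$, and using only the ordinary CLT on the middle range via the uniformly bounded equi-Lipschitz family $\{x\mapsto f(sx):\rho<s\leq\sqrt T\}$) is closer in spirit to an elementary ``simple generalization'' and is entirely self-contained; either route is fine.
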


Let $e^\delta(t)$ be the (random) time that elapses before the time spent by the process $Y^{y}_{\cdot}$, aside from the upcrossings, equals $t$, i.e.,
\[
e^\delta(t) = t + \sum_{n =1}^\infty (\theta^\delta_n \wedge e^\delta(t) - \tau^\delta_{n-1} \wedge e^\delta(t)).
\]
In other words, we stop a `special' clock every time the process hits the vertex $O$, and re-start it once the process reaches the level set $ \{| y |= \delta \}$. Then $e^\delta(t)$ is the actual time that elapses when the special clock reaches time $t$. Let $N_\delta=N^{y,\delta}_t$ be the number of upcrossings of the interval $[0,\delta]$ by the process $Y^{y}_{\cdot}$ prior to time $e^\delta(t)$.

Similarly, let $e^{\delta, \varepsilon}(t)$ be the time that elapses before the time spent by the process $X^{x, \varepsilon}_t$, aside from the
upcrossings, equals $t$.  Let $N^{x, \delta, \varepsilon}_t$ be the number of upcrossings by the process $X^{x, \varepsilon}_t$ prior to time  $e^{\delta, \varepsilon}(t)$.

\begin{lemma}\label{lem:upcrossing_negligible}
We have $e^{\delta}(t)\to t$ and $\delta(N_t^{y,\delta}-D_t^{y,\delta})\to 0$   in $L^1$ as $\delta\downarrow 0$ for each $y\in G$.
\end{lemma}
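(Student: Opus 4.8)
The plan is to treat the two assertions in turn, and in both cases to reduce the problem to controlling the total time that the diffusion $Y^y_\cdot$ spends inside the shrinking neighbourhood $\Gamma(\{|H|\le\delta\})$ of the interior vertex, i.e.\ on the segments $[0,\delta]$ of each edge. First I would fix $t>0$ and $y\in G$. For the claim $e^\delta(t)\to t$: by definition $e^\delta(t)=t+\sum_{n\ge 1}\big(\theta^\delta_n\wedge e^\delta(t)-\tau^\delta_{n-1}\wedge e^\delta(t)\big)$, so that $e^\delta(t)-t$ equals the total amount of real time, before the clock reads $t$, during which the process is performing an upcrossing from $0$ to $\delta$. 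Each such upcrossing is contained in $[0,\delta]$ on one of the edges, so $0\le e^\delta(t)-t\le \Lambda^\delta_{e^\delta(t)}$, where $\Lambda^\delta_s := \int_0^s \mathbf 1_{\{|Y^y_u|\le\delta\}}\,du$ is the occupation time of the $\delta$-neighbourhood of $O$ up to time $s$. Since $e^\delta(t)$ is itself a stopping time bounded by the (finite, a.s.) first time the clock-adjusted process has accumulated $t$ units outside the upcrossings, I would bound $e^\delta(t)$ crudely by a deterministic-in-$\delta$ quantity with finite expectation (using that the total number of downcrossings $D^{y,\delta'}_{T}$ before any fixed horizon $T$ has expectation of order $1/\delta'$ by Lemma~\ref{lemma_downcrossing} and Lemma~\ref{lem:bound_local_time_moment}, hence $e^\delta(t)$ is bounded in $L^1$ uniformly in small $\delta$), and then observe that by the occupation-time formula (item 3 of Definition~\ref{def_local_time}) $\EXP\Lambda^\delta_{S}=2\,\EXP\int_{\{|y|\le\delta\}} a^{-2}(y)L^y_S(y)\,dy$ for a fixed horizon $S$, which tends to $0$ as $\delta\downarrow0$ because $a^{-2}$ is locally integrable near $O$ and $L^y_S(\cdot)$ is a.s.\ continuous and bounded near $O$. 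Dominated convergence (using the uniform-in-$\delta$ $L^1$ bound on $e^\delta(t)$ to control the stopping horizon) then gives $\EXP|e^\delta(t)-t|\to0$.

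For the second claim, $\delta(N_t^{y,\delta}-D_t^{y,\delta})\to0$ in $L^1$, I would compare the two counting functions. By construction, $D_t^{y,\delta}$ counts downcrossings $\delta\to0$ completed before real time $t$, while $N_t^{y,\delta}$ counts upcrossings $0\to\delta$ completed before the special-clock time $e^\delta(t)$. The key structural fact is that downcrossings and upcrossings of $[0,\delta]$ alternate, so $|N_t^{y,\delta}-D^{y,\delta}_{e^\delta(t)}|\le 1$ deterministically (they can differ only by one incomplete crossing at the endpoint). Hence it suffices to show $\delta\,\EXP\big|D^{y,\delta}_{e^\delta(t)}-D^{y,\delta}_t\big|\to0$. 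Now $D^{y,\delta}_\cdot$ increases by one only upon completing a downcrossing, and between real times $t$ and $e^\delta(t)$ the process spends time only in upcrossing excursions, each of which lies in $[0,\delta]$; so the increment $D^{y,\delta}_{e^\delta(t)}-D^{y,\delta}_t$ is at most the number of downcrossings of $[0,\delta]$ whose final return to $O$ occurs during the interval $[t,e^\delta(t)]$, a time interval of (expected) length $e^\delta(t)-t\to0$. Using Lemma~\ref{lemma_downcrossing} to replace $\delta D^{y,\delta}_s$ by $L^y_s$ up to an $L^1$-error vanishing with $\delta$, I get $\delta\big(D^{y,\delta}_{e^\delta(t)}-D^{y,\delta}_t\big)\approx L^y_{e^\delta(t)}-L^y_t$ in $L^1$; and $L^y_{e^\delta(t)}-L^y_t\to0$ in $L^1$ follows from the continuity of $s\mapsto L^y_s$, the convergence $e^\delta(t)\to t$ in $L^1$ established in the first part, and a uniform integrability bound on $L^y_{e^\delta(t)}$ (again from the moment bound of Lemma~\ref{lem:bound_local_time_moment} combined with the $L^1$ bound on the horizon $e^\delta(t)$ and the strong Markov property).

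I expect the main obstacle to be making the first part rigorous, specifically the dominated-convergence argument for $e^\delta(t)$: the horizon $e^\delta(t)$ is itself $\delta$-dependent and a priori only known to be finite a.s., so one must first establish a clean uniform-in-$\delta$ $L^1$ (or even just tightness) bound on $e^\delta(t)$ before one can invoke the occupation-time formula on a fixed horizon and let $\delta\downarrow0$. The cleanest route is probably to note $e^\delta(t)\le \inf\{s: \Lambda^{\delta,c}_s\ge t\}$ where $\Lambda^{\delta,c}_s = s-\Lambda^\delta_s$ is the complementary occupation time, bound this by $\inf\{s: s - 2\sup_{|y|\le\delta}L^y_s\cdot\!\int_{\{|y|\le\delta\}}a^{-2}\ge t\}$, and argue that for $\delta$ small the occupation of the $\delta$-neighbourhood is a genuinely small fraction of elapsed time on any bounded horizon — with the remaining care being that near $O$ the coefficient $a^{-2}$ blows up only logarithmically, which is exactly what keeps $\int_{\{|y|\le\delta\}}a^{-2}\,dy = O(\delta\log(1/\delta)) \to 0$. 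Once this bound is in hand, both convergences reduce to the soft ingredients above (continuity of local time, its finite moments near $0$, and Lemma~\ref{lemma_downcrossing}).
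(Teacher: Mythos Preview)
Your approach is essentially the same as the paper's, which is itself extremely terse: the paper simply cites \cite{freidlin2000sheu} (Lemma~2.2) for the first statement and says the second follows from the first together with the Markov property and Lemmas~\ref{lemma_downcrossing} and~\ref{lem:bound_local_time_moment}. Your occupation-time argument for $e^\delta(t)\to t$ is exactly the mechanism behind the Freidlin--Sheu proof, and your reduction of the second statement via $|N^{y,\delta}_t-D^{y,\delta}_{e^\delta(t)}|\le 1$ and $\delta D^{y,\delta}_s\approx L^y_s$ matches the paper's list of ingredients.

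One small correction: your sentence ``between real times $t$ and $e^\delta(t)$ the process spends time only in upcrossing excursions'' is not true in general (upcrossing time may have accumulated before real time $t$, so the interval $[t,e^\delta(t)]$ can contain downcrossing time as well). Fortunately you do not actually use this claim; you immediately pass to the correct observation that $D^{y,\delta}_{e^\delta(t)}-D^{y,\delta}_t$ counts downcrossings completed in $[t,e^\delta(t)]$, an interval whose length vanishes in $L^1$ by the first part. So the argument survives once that sentence is deleted. The uniform-in-$\delta$ control on the horizon $e^\delta(t)$ that you flag as the main obstacle is indeed the only genuine technical point, and your proposed route via $\int_{\{|y|\le\delta\}}a^{-2}(y)\,dy=o(1)$ (logarithmic blow-up of $a^{-2}$ near $O$) is the right one.
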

\begin{proof} The first statement basically means that most of the time is spent on downcrossings rather than upcrossings.
Its proof  is contained in the proof of Lemma 2.2 in \cite{freidlin2000sheu}. The second statement follows from the first one together with
the Markov property of the process and Lemmas~\ref{lemma_downcrossing} and~\ref{lem:bound_local_time_moment}.
\end{proof}

From Lemmas~\ref{lem:upcrossing_negligible}  and~\ref{lemma_downcrossing} it follows that the conditions of Lemma~\ref{refinedCLT1} are satisfied with our choice of $N_\delta$. We can therefore choose $\delta_0 > 0$ such that
\begin{equation}\label{eq:by_CLT}
\sup_{y\in G}\Big|\E f(Z^{\delta}_1 + \ldots + Z^{\delta}_{N^{\Gamma(x),\delta}_t}) - \E f\Big(\tilde{W}^Q_{\delta N^{\Gamma(x),\delta}_t}\Big)\Big| \leq \eta/10
\end{equation}
whenever $\delta \leq \delta_0$.

Choose $r$ is such that (\ref{rone}) and (\ref{rtwo}) in Lemma~\ref{lem:close_stopping} hold with $\eta/10$ instead of $\eta$. Also, use Lemma \ref{lemma_downcrossing} and Lemma \ref{lem:upcrossing_negligible} to choose $\delta < \delta_0$ sufficiently small so that
\begin{equation}\label{eq:downcrossing_lemma_in_action}
\Big| \E f\Big(\tilde{W}^Q_{\delta D^{\Gamma(x),\delta}_t}\Big) - \E f\Big(\tilde{W}^Q_{L_t^{\Gamma(x)}}\Big)\Big| < \eta/10
\end{equation}
and
\[
\P (\delta N^{\Gamma(x),\delta}_t >  \delta D^{\Gamma(x),\delta}_t +r) \leq r,~~~ \P (e^\delta(t) > t +r) \leq r/2\;.
\]
From the weak convergence of the processes, the latter implies that there is $\varepsilon_0 > 0$ such that
\[
\P (e^{\delta,\varepsilon}(t) > t +r) \leq r
\]
for $\varepsilon < \varepsilon_0$.
By Lemma \ref{lem:close_stopping}, these inequalities imply that
\begin{equation}\label{eq:time_almost_downcrossing}
|\E f(\varepsilon^{1/4} (X_{e^{\delta,\varepsilon}(t)}^{x, \varepsilon}-x) ) - \E f(\varepsilon^{1/4} (X_{t}^{x, \varepsilon}-x) )| < \eta/10\;,
\end{equation}
and
\begin{equation}\label{eq:no_downcrossing_equal_upcrossing}
\Big|\E f\Big(\tilde{W}^Q_{\delta N^{\Gamma(x),\delta}_t}\Big) - \E f\Big(\tilde{W}^Q_{\delta D^{\Gamma(x),\delta}_t}\Big)\Big| < \eta/10\;.
\end{equation}
In what follows $\delta$ is fixed at this value.

Choose $N$ large enough so that
\begin{equation}\label{eq:making_the_number_finite}
|\E f(Z^{\delta}_1 + \ldots + Z^{\delta}_{N^{\Gamma(x),\delta}_t})  - \E f(Z^{\delta}_1 + \ldots + Z^{\delta}_{N^{\Gamma(x),\delta}_t \wedge N}) | < \eta/10
\end{equation}
and by possibly increasing $N$, let $\varepsilon_1>0$  be such that
\begin{equation}\label{eq:finitely_many_downcrossings}
|\E f(\varepsilon^{1/4} (X_{e^{\delta,\varepsilon}(t)}^{x, \varepsilon}-x) ) - \E f(\varepsilon^{1/4} (X_{e^{\delta,\varepsilon}(t) \wedge \sigma^{x,\delta, \varepsilon}_N}^{x, \varepsilon}-x) )| < \eta/10
\end{equation}
for all  $\varepsilon\leq\varepsilon_1$. This latter can be done by noting that by Lemma~\ref{indlemma}, for every $\alpha$ one can select an N such that
\begin{equation}\label{eq:sigma_and_e}
\P(\sigma_N^{x,\delta,\varepsilon}\leq e^{\delta,\varepsilon}(t))<\alpha
\end{equation}
for every small enough $\varepsilon$. Indeed,
\[
\P(\sigma_N^{x,\delta,\varepsilon}\leq e^{\delta,\varepsilon}(t)) = \P(T^{x,\delta, \varepsilon}_1 + \ldots+ T^{x, \delta, \varepsilon}_N \leq t ).
\]
For fixed $N$ and $\delta$, the random variable $T^{x,\delta, \varepsilon}_1 + \ldots+ T^{x, \delta, \varepsilon}_N $ converges in distribution to  some random variable ${\tilde{\tau}}^\delta_N$ as $\varepsilon \downarrow 0$. Choose $N$ large enough so that
\[
\P(\tilde{\tau}^\delta_N \leq t)<\alpha/2,
\]
which implies (\ref{eq:sigma_and_e}). Now we have both $N$ and $\delta$ fixed.

By Lemma~\ref{indlemma}, there is $\varepsilon_2(\delta) > 0$ such that
\begin{equation}\label{eq:conv_of_excursions}
|\E f(\varepsilon^{1/4}(S^{x,\delta,\varepsilon}_1 + \ldots + S^{x,\delta,\varepsilon}_{N^{\Gamma(x),\delta,\varepsilon}_t \wedge N})) -\E f(Z^{\delta}_1 + \ldots + Z^{\delta}_{N^{\Gamma(x),\delta}_t \wedge N}) | <\eta/10
\end{equation}
if $\varepsilon \leq \varepsilon_2$.  It it here where we used the fact that the displacements during upcrossings become independent, in the limit of $\varepsilon \downarrow 0$, from the times spent on downcrossings. We also have that there is an $\varepsilon_3>0$ such that
\begin{equation}\label{eq:excursions_make_up_displacement}
|\E f(\varepsilon^{1/4}(S^{x,\delta,\varepsilon}_1 + \ldots + S^{x,\delta,\varepsilon}_{N^{\Gamma(x),\delta,\varepsilon}_t \wedge N})) -\E f(\varepsilon^{1/4} (X_{e^{\delta,\varepsilon}(t) \wedge \sigma^{x,\delta, \varepsilon}_N}^{x, \varepsilon}-x) )| <\eta/10
\end{equation}
for all  $\varepsilon<\varepsilon_3$.

Collecting (\ref{eq:time_almost_downcrossing}),~(\ref{eq:finitely_many_downcrossings}),~(\ref{eq:excursions_make_up_displacement}),~(\ref{eq:conv_of_excursions}),~(\ref{eq:making_the_number_finite}),~(\ref{eq:by_CLT}),~(\ref{eq:no_downcrossing_equal_upcrossing}) and~(\ref{eq:downcrossing_lemma_in_action}), we obtain (\ref{oddist}) for $\varepsilon\leq\min\{\varepsilon_0,\varepsilon_1,\varepsilon_2,\varepsilon_3\}$, which completes the proof of Theorem~\ref{main_thm}. \qed

\begin{remark}
It is not difficult to show (and it indeed follows from the proof) that convergence in Theorem~\ref{main_thm} is uniform in $x \in \mathbb{R}^2$.
\end{remark}

\section{Proofs of the PDE results}\label{Section_Exit_Time_Proofs}
\noindent{\it Proof of Theorem \ref{Exit_time_results}.} {\it Part 1.}
By the representation formula,
\[
u^{\varepsilon,R}(x)=\E\int_0^{\tau_{\partial D_R}(X_\cdot^{x,\varepsilon})}f(X_s^{x,\varepsilon}/R)\,ds\;,
\]
which can be decomposed as
\[
\E\int_0^{\tau_{\mathcal{L}}(X^{x,\varepsilon}_\cdot)}f(X_s^{x,\varepsilon}/R)ds+ \E \int^{\tau_{\partial D_R}}_{\tau_{\mathcal{L}}(X^{x,\varepsilon}_\cdot)}f(X_s^{x,\varepsilon}/R)\,ds\;,
\]
where $\tau_{\mathcal{L}}$ is the first time the process hits the separatrix. The first term can easily be seen to converge by the averaging theorem to $f(0)\E\bar{\tau}_0(Y^{\Gamma(x)}_\cdot)$, and thus it remains to show that the second term converges to zero. It suffices to show that $\E(\tau_{\partial{D_R}}(X_.^{x,\varepsilon})-\tau_{\mathcal{L}}(X_.^{x,\varepsilon}))\to 0$ as $\varepsilon\to 0$.

Let $ \mathcal{T}$ be the periodicity cell that contains the origin. Recall that $ \mathcal{L}_{\mathcal{T}}$ is the projection of $ \mathcal{L}$ on the torus. Equivalently, we can view it as a set on the plane that is the intersection of $ \mathcal{L}$ and $ \mathcal{T}$.
Thus it is sufficient to show that
\begin{equation} \label{uu1}
\sup_{x \in  \mathcal{L}_{\mathcal{T}}}\E\tau_{\partial D_R}(X_.^{x,\varepsilon})\to 0~~~{\rm as}~\varepsilon \downarrow 0,~R = R (\varepsilon)\;.
\end{equation}
We claim that
\begin{equation} \label{uu2}
\sup_{x \in \mathcal{L}_{ \mathcal{T}}} \P( \tau_{\partial D_R}(X^{x,
\varepsilon}_\cdot) > K ) \rightarrow 0~~~{\rm as}~\varepsilon \downarrow 0,~R = R (\varepsilon)
\end{equation}
for each $K > 0$, and that there is $\varepsilon_0 > 0$ such that
\begin{equation} \label{uu3}
\sup_{\varepsilon \in (0,\varepsilon_0]} \sup_{x \in \mathbb{R}^2} \P(\tau_{ \mathcal{L}} (X^{x,\varepsilon}_\cdot) > 1) < 1\;.
\end{equation}
The latter easily follows from the averaging principle (see \cite{freidlin2012random}, Chapter 8), while the former will be justified below.

Note that
\[
\sup_{x \in \mathcal{L}_{ \mathcal{T}}}
\E\tau_{\partial D_R}(X^{x,
\varepsilon}_\cdot) \leq \int_0^{\infty} \sup_{x \in \mathcal{L}_{ \mathcal{T}}}\P(\tau_{\partial D_R}(X^{x,
\varepsilon}_\cdot) > K)\,dK\;.
\]
By (\ref{uu2}), the integrand tends to zero for each $K$. Also note that the integrand decays exponentially in $K$ uniformly in $\varepsilon$, as
follows from (\ref{uu2}), (\ref{uu3}), and the Markov property of the process. This justifies (\ref{uu1}).

We still need to prove (\ref{uu2}). For a given value of $\delta > 0$ and all sufficiently small $\varepsilon$, we have
\[
\tau_{\partial D_R}(X^{x,
\varepsilon}_\cdot) \leq \tau_{B(0,\delta)}(\varepsilon^{1/4} X^{x,
\varepsilon}_\cdot)\;,
\]
where $\tau_{B(0,\delta)}$ is the time to reach the boundary of the ball of radius $\delta$ centered at the origin.
By Theorem~\ref{main_thm},
\[
\P( \tau_{B(0,\delta)}(\varepsilon^{1/4} X^{x,
\varepsilon}_\cdot) > K) \rightarrow \P( \tau_{B(0,\delta)}(\tilde{W}^Q_{L^0_.}) > K)~~~{\rm as}~ \varepsilon \downarrow 0\;,
\]
since the boundary of the event on the right hand side has probability zero. It remains to note that we can make the right hand
side arbitrarily small by choosing a sufficiently small $\delta$. This is possible
since   $\P(L^0_t > 0) = 1$ for each $t > 0$ (as follows from (\ref{repre}) and the elementary properties of the Brownian motion).

{\it Part 2.} Let's first assume that $f \geq 0$. Observe that for each $t > 0$ we have
\[
\E\int_0^{\tau_{\partial D_R}(X_.^{x,\varepsilon}) \wedge t}f(X_s^{x,\varepsilon}/R)ds=\E\int_0^{\tau_{\partial D }(R^{-1}X^{x,\varepsilon}_.)\wedge t}f\left(R^{-1}X^{x,\varepsilon}_s\right)ds =: \E I_f^t(R^{-1} X^{x,\varepsilon}_\cdot).
\]

By Theorem~\ref{main_thm}, the processes $R^{-1}X^{x,\varepsilon}_.$ converge weakly to $C^{-1}W_{L_\cdot^{\Gamma(x)}}^Q$. Since $I_f^t$ is bounded and is continuous almost surely with respect to the measure induced by $C^{-1}W_{L_\cdot^{\Gamma(x)}}^Q$, we have
\begin{equation} \label{abn}
\E\int_0^{\tau_{\partial D_R}(X_.^{x,\varepsilon}) \wedge t}f(X_s^{x,\varepsilon}/R)\,ds \rightarrow \E\int_0^{\tau_{\partial D}(C^{-1}W_{L_\cdot^{\Gamma(x)}}^Q) \wedge t}f(C^{-1}W_{L_s^{\Gamma(x)}}^Q)\,ds~~{\rm as}~\varepsilon \downarrow 0.
\end{equation}
As in the proof of Part 1, we have that $\P ( \tau_{\partial D_R}(X_.^{x,\varepsilon}) > K)$ decays exponentially in $K$ uniformly in $\varepsilon$, which justifies the fact that we can take $t = \infty$ in (\ref{abn}).
The general case follows by taking $f=f_+-f_-$.

{\it Part 3.} The PDE result easily follows from the weak convergence of the corresponding processes. More precisely, let
$\bar{X}^{x,\varepsilon}_t = R^{-1}(\varepsilon) X^{x,\varepsilon}_{\varepsilon^{1/2}R(\varepsilon)^2 t}$.
We need to show that
\begin{equation} \label{hn1}
\bar{X}^{x,\varepsilon}_\cdot \Rightarrow\tilde{W}^{cQ}_.\qquad~\textrm{as}~\varepsilon\downarrow 0.
\end{equation}

It follows from \cite{Koralov2004}  that
\begin{equation} \label{hn2}
\frac{\varepsilon^{1/4} X_{k \cdot}^{x,\varepsilon}}{\sqrt{k}}\Rightarrow \tilde{W}_\cdot^{D(\varepsilon)} \qquad~\textrm{as}~k\to\infty,
\end{equation}
where $D(\varepsilon)={D}_0+o(1)$ and $D_0$ is a constant multiple of $Q$.
(Strictly speaking, the result in \cite{Koralov2004} concerns the finite dimensional distributions, but the
generalization to the functional CLT is standard in this situation.) Moreover,
it is not difficult to show (by following the proof in \cite{Koralov2004} and using arguments similar to those in the the proof of Lemma \ref{abstractlemma}) that the convergence is uniform in  $\varepsilon$. Therefore, (\ref{hn2}) implies (\ref{hn1}) with $ {c}Q=D_0$. \qed

\subsubsection*{Acknowledgements}

{\small
The authors are grateful to D.\ Dolgopyat, G.\ Iyer, and A.\ Novikov for various helpful suggestions.
While working on the paper, L.\ Koralov was partially supported by the Simons Fellowship in Mathematical Sciences as well as
the NSF grant number 1309084. Z.\ Pajor-Gyulai was partially supported by the NSF grant number 1309084. M.\ Hairer was partially supported by the Royal Society and by the Leverhulme Trust.
}

\bibliography{./citations}
\bibliographystyle{./Martin}

\end{document}